\documentclass[a4paper,11pt]{article}

\usepackage{mathtools,amsthm,amssymb,mathrsfs,amsfonts,newtxmath,physics}
\usepackage{tikz,graphicx,multirow,float,enumitem}
\usepackage{fancyhdr,titlesec,hyperref,geometry}
\usepackage[normalem]{ulem}

\usetikzlibrary{matrix,positioning,arrows.meta}

\usepackage{array}
\newcolumntype{D}{>{$\displaystyle}c<{$}}

\AtBeginDocument{%
    \setlength{\abovedisplayskip}{9pt plus 2pt minus 2pt}%
    \setlength{\belowdisplayskip}{9pt plus 2pt minus 2pt}%
    \setlength{\abovedisplayshortskip}{3pt plus 3pt}%
    \setlength{\belowdisplayshortskip}{7pt plus 2pt minus 2pt}%
}

\titleformat{name=\subsection}[runin]
{\large\normalfont\bfseries}{\thesubsection.}{0.5em}{}[]
\title{\huge\textbf{\textsf{A Separated Bump Theorem in $L^2$}}}
\author{Zhou Yuekai}
\date{October 23, 2026}

\newtheoremstyle{BTFFS}
    {8pt}{8pt}{\normalfont}{}{\bfseries}{}{.5em}{}
\theoremstyle{BTFFS}
\newtheorem{theorem}{Theorem}[section]
\newtheorem*{theorem*}{Theorem}
\newtheorem{lemma}[theorem]{Lemma}
\newtheorem{corollary}[theorem]{Corollary}
\newtheorem{proposition}[theorem]{Proposition}

\newtheorem{definition}[theorem]{Definition}
\newtheorem{example}[theorem]{Example}
\newtheorem{notes}[theorem]{Note}
\newtheorem{remark}[theorem]{Remark}

\newtheorem{condition}[theorem]{Condition}
\newtheorem{assumption}[theorem]{Assumption}

\setlist[itemize]{itemsep=0pt,parsep=0pt}
\setlist[enumerate]{itemsep=0pt,parsep=0pt}

\newenvironment{dingli*}[1][]
{\begin{theorem*}[#1]\setlength{\leftskip}{30pt}\setlength{\rightskip}{30pt}}
{\end{theorem*}}

\newenvironment{zhengming*}[1][]
{\begin{proof}[#1]\setlength{\leftskip}{30pt}\setlength{\rightskip}{30pt}}
{\end{proof}}

\newcommand{\RR}{\mathbb{R}}

\newcommand{\NN}{\mathbb{N}}
\newcommand{\ZZ}{\mathbb{Z}}

\newcommand{\car}{\mathrm{Car}}

\DeclarePairedDelimiter{\xkhdelim}{(}{)}
\DeclarePairedDelimiter{\lkhdelim}{\langle}{\rangle}

\DeclarePairedDelimiter{\dakhdelim}{\lbrace}{\rbrace}

\newcommand{\xkh}[1]{\xkhdelim*{#1}}
\newcommand{\lkh}[1]{\lkhdelim*{#1}}

\newcommand{\dakh}[1]{\dakhdelim*{#1}}
\newcommand{\qw}[2]{\mathbb{E}\left( #1 \mid #2 \right)}

\newcommand{\EE}{\mathbb{E}}
\newcommand{\PP}{\mathbb{P}}

\newcommand{\cD}{\mathcal{D}}

\newcommand{\cF}{\mathcal{F}}
\newcommand{\cG}{\mathcal{G}}

\newcommand{\cP}{\mathcal{P}}
\newcommand{\cQ}{\mathcal{Q}}

\newcommand{\dmu}{\dd{\mu}}
\newcommand{\dnu}{\dd{\nu}}

\newcommand{\dt}{\dd{t}}
\newcommand{\dw}{\dd{\omega}}

\DeclareMathOperator*{\esssup}{ess\,sup}
\DeclareMathOperator*{\essinf}{ess\,inf}

\newenvironment{thm}[1][]
{\begin{theorem}[#1]\setlength{\leftskip}{30pt}\setlength{\rightskip}{30pt}}
{\end{theorem}}
\newenvironment{lem}
{\begin{lemma}\setlength{\leftskip}{30pt}\setlength{\rightskip}{30pt}}
{\end{lemma}}
\newenvironment{prop}
{\begin{proposition}\setlength{\leftskip}{30pt}\setlength{\rightskip}{30pt}}
{\end{proposition}}
\newenvironment{defn}
{\begin{definition}\setlength{\leftskip}{30pt}\setlength{\rightskip}{30pt}}
{\end{definition}}
\newenvironment{cor}
{\begin{corollary}\setlength{\leftskip}{30pt}\setlength{\rightskip}{30pt}}
{\end{corollary}}
\newenvironment{rem}
{\begin{remark}\setlength{\leftskip}{30pt}\setlength{\rightskip}{30pt}}
{\end{remark}}

\titleformat{\section}{\Large\normalfont\bfseries}{\thesection.}{0.5em}{}

\newcommand{\proofpart}{\par\bigskip\noindent\(\blacktriangleright\)\enspace\ignorespaces}

\begin{document}
	
	\maketitle
	
	\begin{abstract}
		We prove a separated entropy bump theorem for two-weight $L^2$ inequalities for positive operators defined by Carleson sequences on a probability space with a general filtration. The bump conditions use conditional averages of martingale maximal functions and involve the first power of the entropy penalty, removing the square appearing in the separated condition of Treil and Volberg. As a consequence, we obtain separated Orlicz bump estimates for a class of doubling Young functions. These results apply, in particular, to sparse operators. The proof uses principal sequences, a natural global extension of principal sets, and combines a good-$\lambda$ estimate with a weighted Carleson embedding to obtain the required testing inequalities. 
	\end{abstract}
	
\section{Preliminaries}
\label{sec:preliminary}

\subsection{Basic definition}

Consider a probability measure space $(\Omega, \cF, \PP)$ with the filtration $\{\cF_n\}_{n \geq 0}$, where $\cF_n \subset \cF_{n + 1} \subset \cF$ for all $n \in \NN$. No atomicity assumption is imposed in this paper. Denote $\cF_\infty = \bigvee_{n \in \NN} \cF_n$.

We write $\abs{A}$ for $\PP(A)$ and $\dw$ for $\dd{\PP}$ in integrals.

For a measurable set $E$ we define the average
\[
	\lkh{f}_{E} = \frac{1}{\abs{E}} \int_E f \dw.
\]

\begin{defn}
	Let $\cG \subseteq \cF$ be a sub-$\sigma$-algebra such that $\PP|_{\cG}$ is a probability measure. For $f \in L^1(\Omega, \cF, \PP)$, the conditional expectation of $f$ with respect to $\cG$ is the unique function $g \in L^1(\Omega, \cG, \PP)$, up to equality $\PP$-a.e. (almost everywhere), such that
	\[
		\lkh{g}_{A} = \lkh{f}_{A}, \qquad A \in \cG.
	\]
	We write
	\[
		g = \qw{f}{\cG}.
	\]
	For the filtration $(\cF_{n})_{n \geq 0}$, we set
	\[
		f_n = \EE_n\xkh{f} := \qw{f}{\cF_{n}}.
	\]
\end{defn}

The martingale maximal function $M$ and the $\geq n$-maximal function $M_n$ are defined by
\[
	M{f} = \sup_{n \in \NN} \abs{\EE_n\xkh{f}}, \quad M_n{f} = \sup_{k \geq n} \abs{\EE_k\xkh{f}}.
\]

\begin{defn}
	Let $a = (a_n)_{n \in \NN}$ be a sequence of nonnegative functions such that $a_n \in L^1(\cF_n)$ for $n \in \NN$. We say that $a$ is a Carleson sequence if
	\[
		\norm{a}_{\car} := \sup_{n \in \NN} \esssup \EE_{n}\xkh{\sum_{k \geq n}{a_k}} < \infty.
	\]
	We call $\norm{a}_{\car}$ the Carleson norm of $a$.
\end{defn}

A family $\cQ \subset \cF$ is called a sparse family if for any $I \in \cQ$
\[
	\abs{\bigcup_{J \in \cQ, J \subsetneq I} J} \leq \frac{1}{2}\abs{I}.
\]
The constant $\frac{1}{2}$ can be replaced by any constant $\eta \in (0, 1)$, and the corresponding family is called an $\eta$-sparse family. In this paper, we only consider the case $\eta = \frac{1}{2}$.

\begin{defn}
	Let $\cQ \subset \cF$ be a sparse family. A sparse operator $T = T_\cQ$ is defined by
	\[
		T_\cQ{f} = \sum_{I \in \cQ} \lkh{f}_{I} \chi_{I}.
	\]
\end{defn}

To work with general filtrations, we consider operators defined directly from Carleson sequences. 

\begin{defn}
	Let $a = (a_n)_{n \in \NN}$ be a Carleson sequence. We define the operator $T_a$ by
	\begin{equation}
		\label{def:CARoper}
		T_a{f} = \sum_{n \in \NN} a_n \EE_n\xkh{f}.
	\end{equation}
\end{defn}

In the atomic setting, sparse operators are special cases of $T_a$. We use $T_a$ in the main theorems and their proofs.

\subsection{Main results}

For a weight $u$, we denote
\[
	u_{n} := \qw{u}{\cF_n}, \quad u_{n}^{M} := \qw{\abs{M_n u}}{\cF_n},
\]
and use analogous notation for a weight $v$. $M_{\varphi}$ is the multiplication operator, $M_{\varphi} f = \varphi f$. 

For a Young function $\Phi$, we denote the conditional Luxemburg norm by
\[
	\norm{f}^{\cF_n}_{L^\Phi} := \essinf \dakh{\lambda > 0 : \EE_n\xkh{\Phi\xkh{\frac{\abs{f}}{\lambda}}} \leq 1},
\]
where $\lambda$ ranges over positive $\cF_n$-measurable functions.

\begin{thm}
	\label{thm:1side-bump}
	Let $a$ be a Carleson sequence and let $T = T_a$ be the corresponding operator. Let $\Phi_1$, $\Phi_2$ be doubling Young functions such that, for some $t_0 > 1$,
	\[
		\int_{t_0}^{\infty} \frac{1}{\Phi_i(t)} \dt < \infty, \quad i = 1, 2,
	\]
	and the functions $t \mapsto \frac{\Phi_i(t)}{t \log t}$ are increasing on $[t_0, \infty)$. Suppose that the weights $u$, $v$ satisfy the one-sided bump conditions
	\[
		\sup_{n \in \NN} \esssup \xkh{\norm{u}^{\cF_n}_{L^{\Phi_1}} v_n} < \infty, \quad \sup_{n \in \NN} \esssup \xkh{\norm{v}^{\cF_n}_{L^{\Phi_2}} u_n} < \infty.
	\]
	Then the operator $M_v^{\frac{1}{2}} T M_u^{\frac{1}{2}}$ is bounded on $L^2$, i.e.
	\[
		\int_{\Omega} \abs{T\xkh{f u}}^2 v \dw \lesssim \int_{\Omega} \abs{f}^2 u \dw, \qquad f \in L^2(u).
	\]
	The implicit constant depends only on $\Phi_1$, $\Phi_2$, the two bump bounds and $\norm{a}_{\car}$.
\end{thm}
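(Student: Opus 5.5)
The plan is to deduce the statement from a separated \emph{entropy} bump estimate in which only conditional averages of martingale maximal functions enter. By duality and the symmetry of $T_a$ ($\int T_a(fu)\,gv = \sum_n \int a_n \EE_n(fu)\EE_n(gv)$), the $L^2$ bound is a bilinear estimate
\[
\Big|\sum_n \int_\Omega a_n\,\EE_n(fu)\,\EE_n(gv)\dw\Big| \lesssim \norm{f}_{L^2(u)}\norm{g}_{L^2(v)},
\]
and both hypotheses play symmetric roles, one per side. It therefore suffices to prove two things. First, an abstract separated bump theorem: if
\[
\sup_n \esssup \Big( v_n\,\frac{u_n^M}{u_n}\,\rho\big(\tfrac{u_n^M}{u_n}\big)\, u_n\Big)<\infty
\]
for a suitable entropy penalty $\rho$, and the analogous condition with $u,v$ interchanged, then the bilinear form is bounded. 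Second, an Orlicz-to-entropy reduction showing that the $L^{\Phi_i}$ bump conditions imply these entropy conditions. Here $\rho$ appears to the first power, not squared as in the Treil--Volberg separated condition.

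\textbf{The Orlicz-to-entropy reduction.} I would first prove a conditional version of the classical estimate
\[
\EE_n(M_n u)\lesssim \norm{u}^{\cF_n}_{L^{\Phi}},
\]
valid when $\int^\infty dt/\Phi(t)<\infty$ and $\Phi$ is doubling. The argument normalizes $u/\lambda$ with $\lambda=\norm{u}^{\cF_n}_{L^\Phi}$, which is $\cF_n$-measurable, and uses the conditional Doob weak-type inequality on $\{M_n u>t\lambda\}$. One integrates in $t$ and splits $u$ at level $t\lambda/2$, exactly as in the Euclidean proof that $L^\Phi$ bumps control the $L\log L$-type average of $Mu$. The monotonicity of $\Phi(t)/(t\log t)$ then lets one absorb the extra factor $\rho(u_n^M/u_n)$, with $\rho$ logarithmic, by a slightly stronger Young function. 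The resulting quantity is still dominated by $\norm{u}^{\cF_n}_{L^{\Phi_1}}$ up to constants depending on $\Phi_1$. Multiplying by $v_n$ converts the first hypothesis into the entropy condition on the $u$-side, and symmetrically for $v$.

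\textbf{The abstract separated theorem.} Following the abstract, I would first reduce boundedness to the two Sawyer-type testing inequalities:
\[
\int_\Omega \Big(\sum_{k\ge n} a_k\EE_k(u\chi_E)\Big)^2 v \lesssim \int_E u
\]
for $E\in\cF_n$, together with the dual inequality. This reduction is a standard consequence of positivity of $T_a$ and a stopping-time (principal set) decomposition in the two-weight theory for Carleson-sequence operators. To verify testing, I would build \emph{principal sequences}: stopping times where the ratio $u_k/u_n$ doubles. These globalize principal sets to nonatomic filtrations. One then splits $T_a(u\chi_E)$ along the stopping layers. On each layer, a good-$\lambda$ inequality compares the level sets of the layered operator with those of $M_n u$. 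A weighted Carleson embedding with weight $v$ then converts the sum of squares into $\sum$ (layer measure)$\times v_n (u^M_n)$-type terms. The entropy condition is exactly what makes the geometric sum over layers converge with $\rho$ to the first power.

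\textbf{Main obstacle.} The step I expect to be hardest is this testing estimate: making the good-$\lambda$ argument and the weighted Carleson embedding interact so that only one factor of $\rho$ is lost. The obvious Cauchy--Schwarz over layers produces $\rho^2$. I would try to avoid it by pairing the $\ell$-th layer with the weight decay $2^{-\ell}$ coming from the principal sequence, and summing the penalty additively rather than squaring it.
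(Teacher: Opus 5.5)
Your outline follows the paper's architecture: entropy reduction, conditional Sawyer testing, principal sequences, good-$\lambda$, Carleson embedding. There is, however, a genuine gap. The step that carries the content of the theorem is the conditional testing bound $\EE_{\ell}\big(v(\sum_{k\ge\ell}a_ku_k)^2\big)\lesssim u_\ell$ under a condition with your penalty $\rho$ (the paper's $\alpha$) to the \emph{first} power. You leave it open, and your remedy does not supply it.

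The decay $2^{-k}$ from a principal sequence (Lemma~\ref{lem:principal-counting}) only controls how stopping sets pile up where $u_nv_n$ is essentially frozen. It says nothing about how $r_n:=u_n^M/u_n$ (the paper's $\rho_n$) varies between layers, and that is where the power of the penalty is decided.

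The missing idea is a potential function of $r_n$:
\begin{itemize}
\item Since $M_{n+1}u\le M_nu$, $r_n$ is a supermartingale for $\dmu=u\dw$.
\item Take $\psi$ as in \eqref{def:psi}: decreasing, integrable, $t^2\psi(t)$ increasing, $\psi(t)\ge\alpha(1)/(t\alpha(t))$. The last property turns the bump into $u_nv_n\le\psi(r_n)$.
\item For $G(r)=\int_1^r\psi$, the decrements $e_n$ of $G(r_n)$ satisfy $\EE_\ell(\sum_{n\ge\ell}u_ne_n)\le(C_\psi-1)u_\ell$.
\item Each time $\psi(r)$ doubles, the quantity $u\,r\,\psi(r)$ at that time is paid for by the decrements since the previous doubling (Lemma~\ref{lem:d}, Corollary~\ref{cor:stopped-defect}).
\end{itemize}

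In Lemma~\ref{lem:good-lambda} one cuts at these doubling times $\gamma_m$. On each block one applies the weak $L^2$ bound of Lemma~\ref{lem:principal-weak}, using $u_nv_n\le 2\psi(r_{\gamma_m})$. The level is split as $t_m=c\lambda r_{\gamma_m}^{-1/2}$, which is summable because $r_{\gamma_{m+1}}<2^{-1/2}r_{\gamma_m}$. Block $m$ then costs $c^{-2}r_{\gamma_m}\psi(r_{\gamma_m})u_{\gamma_m}$, which is exactly the decrement-controlled quantity. Integrating in $d\lambda/\lambda$ counts each decrement only $\log 4$ times. The $\gamma_0$-term is handled by \eqref{eq:emb-simple} with $\varphi=\psi$, an embedding in $\mu$ rather than ``with weight $v$''; $v$ enters only through $u_nv_n\le\psi(r_n)$. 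Without some such device your outline does not get past the squared penalty you anticipate.

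Two further points. First, the Orlicz-to-entropy step is imprecise.
\begin{itemize}
\item The bound $\EE_n(M_nu)\lesssim\norm{u}^{\cF_n}_{L^\Phi}$ does not use $\int^\infty dt/\Phi<\infty$; it only needs $L^\Phi\subset L\log L$.
\item That integrability hypothesis is what makes the penalty admissible: $t\alpha(t)$ increasing and $\int_1^\infty dt/(t\alpha(t))<\infty$. A penalty of size $\log t$ fails this.
\item The penalty is evaluated at the ratio $u_n^M/u_n$, not at a level of $u$. So it cannot be absorbed by a layer-cake splitting or a ``slightly stronger Young function.'' You need the entropy inequality of Lemma~\ref{lem:YoungAlpha} applied to the conditional distribution of $u$, together with $u_n^M\le\norm{u}^{\cF_n}_{L\log L}$ and the monotonicity of $t\alpha(t)$.
\end{itemize}
Second, your dominating principal sequence must also stop when $\sum_{i=\tau_j}^k a_i$ exceeds a multiple of $\norm{a}_{\car}$, not only when $u_k$ jumps. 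Otherwise $\sum_n a_nu_n$ is not pointwise bounded by a multiple of $\sum_n b_nu_n$.
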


This theorem follows from the separated entropy bump theorem below. 

\begin{thm}
	\label{thm:1side}
	Let $a$ be a Carleson sequence and let $T = T_a$ be the operator defined by \eqref{def:CARoper}. Let the weights $u$, $v$ satisfy the separated bump conditions
	\[
		\sup_{n \in \NN} \esssup \xkh{\alpha\xkh{\frac{u_{n}^{M}}{u_n}} u^{M}_{n} v_n} = A_u < \infty, \quad \sup_{n \in \NN} \esssup \xkh{\alpha\xkh{\frac{v_{n}^{M}}{v_n}} v^{M}_{n} u_{n}} = A_v < \infty,
	\]
	where $\alpha \colon [1, \infty) \to \RR_{+}$ is such that the function $t \mapsto t\alpha(t)$ is increasing and
	\begin{equation}
		\label{def:Ca}
		C_{\alpha} := \frac{1}{\alpha(1)} + \int_1^{\infty} \frac{1}{t\alpha(t)} \dt < \infty.
	\end{equation}
	Then the operator $M_v^{\frac{1}{2}} T M_u^{\frac{1}{2}}$ is bounded on $L^2$, i.e.
	\[
		\int_{\Omega} \abs{T\xkh{f u}}^2 v \dw \lesssim \int_{\Omega} \abs{f}^2 u \dw, \qquad f \in L^2(u).
	\]
\end{thm}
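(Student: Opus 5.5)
Since $T=T_a$ is a positive operator with $a_n\ge 0$, we first reduce the boundedness of $M_v^{1/2}TM_u^{1/2}$ to two testing inequalities by duality. Pairing against $gv$ with $g\ge0$, $f\ge0$, we need
\[
\sum_n \int a_n\, \EE_n(fu)\,\EE_n(gv)\dw \lesssim \norm{f}_{L^2(u)}\norm{g}_{L^2(v)}.
\]
For positive dyadic-type operators defined by Carleson sequences, a Sawyer-type reduction (Treil's / Nazarov--Treil--Volberg argument, adapted to general filtrations via stopping times) shows that it suffices to verify the testing conditions
\[
\int_E \Big(\sum_{k\ge n} a_k \EE_k(\chi_E u)\Big)^2 v\dw \lesssim \int_E u\dw,\qquad E\in\cF_n,
\]
and the symmetric one with $u,v$ swapped, for all $n$. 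Rather than localize to sets, we will work with \emph{principal sequences}. For $f\in L^2(u)$ we build stopping times $\tau_0=0<\tau_1<\dots$, where $\tau_{j+1}$ is the first time the ratio $\EE_k(fu)/u_k$ exceeds twice its value at $\tau_j$. We then dominate the bilinear form by a sum over stopping layers of $\langle f\rangle^u_{\tau_j}\langle g\rangle^v_{\sigma_i}$ times localized pieces. The layers are Carleson with respect to $u$ (respectively $v$), so the weighted Carleson embedding theorem (martingale version) controls $\sum_j (\langle f\rangle^u_{\tau_j})^2 u(\text{layer})\lesssim\norm f^2_{L^2(u)}$. This leaves a local estimate on each layer, which is the testing inequality above.

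To prove the testing inequality we fix $E\in\cF_n$ and set $F=\sum_{k\ge n}a_k u_k\chi_E$. This is the key step and where the separated entropy bump enters. We decompose the time range $k\ge n$ according to the size of the ``entropy ratio'' $u^M_k/u_k\approx 2^m$, $m\ge0$, giving $F=\sum_m F_m$. On the part where $u^M_k/u_k\sim 2^m$ the hypothesis gives $u_k v_k\le u^M_k v_k\lesssim A_u/\alpha(2^m)$. We then estimate $\norm{F_m}_{L^2(v)}$ by a good-$\lambda$ inequality comparing $F_m$ to $M(u\chi_E)$ on the level sets. Since $F_m$ is a Carleson-weighted sum of averages, $\PP(F_m>2\lambda,\, Mu\le\gamma\lambda)\lesssim \gamma\,\PP(F_m>\lambda)$ conditionally, with the $v$-measure handled by $v_k$. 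This yields
\[
\int F_m^2 v \lesssim \frac{A_u\, 2^m}{\alpha(2^m)}\cdot\frac{1}{2^m}\int_E M_n u\dw\cdot(\text{Carleson norm}) ,
\]
and the entropy factor $\EE_n(M_nu)=u^M_n$ is absorbed again by the bump at the base level. The goal is a bound $\norm{F_m}_{L^2(v)}^2\lesssim \frac{1}{\alpha(2^m)}\,\norm{a}_{\car}A_u\,u(E)$ up to a harmless factor.

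The main obstacle is summing in $m$ with only the \emph{first} power of $\alpha$. The crude triangle inequality $\norm F\le\sum_m\norm{F_m}$ needs $\sum_m \alpha(2^m)^{-1/2}<\infty$, which is exactly the squared Treil--Volberg condition. To avoid this, we will expand $\norm{F}^2_{L^2(v)}=\sum_{m,m'}\langle F_m,F_{m'}v\rangle$ and exploit that the cross terms $m'\ge m$ are supported where the larger-ratio stopping structure nests inside the smaller one. Each cross term is controlled by one factor of $\alpha(2^{m'})^{-1}$ times a geometrically summable quantity in $m'-m$ (via the Carleson embedding in the principal sequence). Monotonicity of $t\alpha(t)$ allows comparison of dyadic pieces, and $\sum_m\alpha(2^m)^{-1}\lesssim \alpha(1)^{-1}+\int_1^\infty\frac{dt}{t\alpha(t)}=C_\alpha$ closes the estimate. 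The symmetric testing condition follows identically from $A_v$, completing the proof.
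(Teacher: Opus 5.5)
\textbf{The gap is in your last paragraph, and it is where the whole difficulty of the theorem lies.} You assert that every cross term $\EE_n(vF_mF_{m'})$, $m'\ge m$, is at most $\alpha(2^{m'})^{-1}$ times a factor geometric in $m'-m$, because the high-entropy structure ``nests inside'' the low-entropy one.

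Split the cross term by time order. Here $L_m$ denotes the set of times $k$ with $\rho_k\in[2^m,2^{m+1})$, and $c_m:=A_u/(2^m\alpha(2^m))$.
\begin{itemize}
\item \emph{Low-entropy time $k\in L_m$ before high-entropy time $j\in L_{m'}$.} You can condition $v$ down to $\cF_j$ and use $u_jv_j\le c_{m'}$. Together with $\EE_n\sum_{k\in L_m}a_ku_k\lesssim 2^m\norm{a}_{\car}u_n$ (Lemma~\ref{lem:carPleqMAX} plus restarting at the first entry into level $m$), this gives $2^{m-m'}A_u\norm{a}_{\car}^2u_n/\alpha(2^{m'})$, as you want.
\item \emph{High-entropy time $j\in L_{m'}$ before low-entropy time $k\in L_m$.} Now $v$ can only be conditioned down to time $k$. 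The available bound is $u_kv_k\le c_m$ against $\EE_n\sum_{j\in L_{m'}}a_ju_j$, which is only controlled by $2^{m'}\norm{a}_{\car}u_n$. The result is $2^{m'-m}A_u\norm{a}_{\car}^2u_n/\alpha(2^m)$, which grows in $m'-m$.
\end{itemize}
This second ordering is the typical case, not an exception. Indeed, $\rho_n=u^M_n/u_n$ is a supermartingale under $\dmu=u\dw$, so the entropy tends to decrease along the filtration. Maximal inequalities penalize increases of $\rho$, not decreases. Neither the Carleson embedding nor the monotonicity of $t\alpha(t)$ touches these terms. Without them your level decomposition only reproduces the Treil--Volberg condition $\sum_m\alpha(2^m)^{-1/2}<\infty$.

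A smaller point about your per-level bound, which is right in substance. The hypothesis gives $\rho_k\alpha(\rho_k)u_kv_k\le A_u$, so on level $m$ you have $u_kv_k\le c_m$, not $A_u/\alpha(2^m)$. The factor $u^M$ is absorbed not by the bump at the base time but by restarting at the first entry time $\sigma$ into level $m$, where $u^M_\sigma\le 2^{m+1}u_\sigma$. No good-$\lambda$ argument is needed there.

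\textbf{The missing idea is a budget that pays for drops of $\rho$.} The paper builds it as follows.
\begin{itemize}
\item Take $\psi(t)=t^{-2}\bigl(1+2\alpha(1)\int_1^t ds/\alpha(s)\bigr)$. It satisfies $\psi(t)\ge\alpha(1)/(t\alpha(t))$, $t^2\psi(t)$ is increasing, and $C_\psi=1+\int_1^\infty\psi=2\alpha(1)C_\alpha$.
\item Let $G(r)=\int_1^r\psi$, which is concave and increasing. The Jensen defects $e_n=G(\rho_n)-\EE_\mu(G(\rho_{n+1})\mid\cF_n)\ge0$ have total $\EE_\ell\sum_n u_ne_n\le(C_\psi-1)u_\ell$.
\item Any drop with $\psi(\rho_\tau)>2\psi(\rho_\sigma)$ consumes at least $u_\tau\rho_\tau\psi(\rho_\tau)/48$ of this budget (Lemma~\ref{lem:d}, Corollary~\ref{cor:stopped-defect}).
\end{itemize}
This is combined with a good-$\lambda$ estimate for the dominating principal sequence:
\begin{itemize}
\item $[\gamma_0,\tau)$ is cut into blocks at the times where $\psi(\rho)$ doubles.
\item On each block, a weak-$L^2$ bound with constant $\psi(\rho_{\gamma_m})$ is applied at threshold $t_m=c\lambda\rho_{\gamma_m}^{-1/2}$. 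These thresholds are summable because $\rho_{\gamma_m}\ge1$ decreases geometrically along the blocks.
\item The resulting loss $\lambda^2/t_m^2\approx\rho_{\gamma_m}$ is exactly what the defect pays for.
\item The defects are localized to $\{\lambda<s_n\le4\lambda\}$, so integrating in $d\lambda/\lambda$ costs only $\log 4$.
\end{itemize}
Some mechanism of this kind is needed to reach the first power of $\alpha$, and your proposal does not supply one.
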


\section{General Tools}
\label{sec:tools}

In this paper, we denote
\[
\rho_n :=
\begin{cases}
	\displaystyle \frac{u_n^M}{u_n} , & \text{where } u_n > 0; \\
	1 , & \text{where } u_n = 0. 
\end{cases}
\]

\begin{rem}
	The real-variable facts about Young functions and the function $\alpha$ used below are taken from Section~3 of \cite{enma2016}. Likewise, the Bellman function used in the Carleson embedding theorem is provided by Lemma~4.3 of \cite{enma2016}. We state the facts needed here and do not reproduce their constructions.
\end{rem}

\subsection{Young functions and entropy bumps}

For a measurable function $f$, write
\[
	N_f(t) = \abs{\dakh{\abs{f} > t}}, \quad N_f^{\cF_n}(t) = \EE_n\xkh{\chi_{\dakh{\abs{f} > t}}}, \quad t \geq 0.
\]
Following \cite[Section~3.2]{enma2016}, we equip $L\log L$ with the Lorentz norm determined by $\psi_0(s) = s\log(\frac{e}{s})$ for $0 < s \leq 1$ and $\psi_0(0) = 0$. Thus,
\[
	\norm{f}_{L\log L} = \int_0^{\infty} \psi_0\xkh{N_f(t)} \dt, \quad \norm{f}^{\cF_n}_{L\log L} = \int_0^{\infty} \psi_0\xkh{N_f^{\cF_n}(t)} \dt.
\]

The following result is Lemma~3.3 of \cite{enma2016}.

\begin{lem}
	\label{lem:YoungAlpha}
	Let $\Phi$ be a doubling Young function such that, for some $t_0 > 1$,
	\[
		\int_{t_0}^{\infty} \frac{1}{\Phi(t)} \dt < \infty,
	\]
	and $t \mapsto \frac{\Phi(t)}{t\log t}$ is increasing on $[t_0, \infty)$. Then there exists $\alpha \colon [1, \infty) \to \RR_+$ such that $t \mapsto t\alpha(t)$ is increasing,
	\[
		C_{\alpha} := \frac{1}{\alpha(1)} + \int_1^{\infty} \frac{1}{t\alpha(t)} \dt < \infty,
	\]
	and
	\[
		\alpha\xkh{\frac{\norm{f}_{L\log L}}{\norm{f}_{L^1}}} \norm{f}_{L\log L} \leq \norm{f}_{L^\Phi}, \qquad 0 \neq f \in L^\Phi.
	\]
\end{lem}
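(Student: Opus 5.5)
We construct $\alpha$ directly from $\Phi$ and reduce the inequality to a pointwise comparison of distribution-function integrals. By homogeneity of both sides (the ratio $\norm{f}_{L\log L}/\norm{f}_{L^1}$ is scale invariant), we may normalize $\norm{f}_{L^\Phi} = 1$, so $\int_\Omega \Phi(\abs{f})\dw \le 1$. Then it suffices to show $\alpha(\norm{f}_{L\log L}/\norm{f}_{L^1})\norm{f}_{L\log L} \le 1$; the doubling of $\Phi$ lets us trade the normalization for $\int \Phi(\abs f) \le C$ at the cost of a constant, which is then absorbed by rescaling $\alpha$.

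\textbf{Step 1 (the $L\log L$ norm of a normalized function).} Since $\psi_0(s)= s\log(e/s) = s + s\log(1/s)$, we have
\[
\norm{f}_{L\log L} = \norm{f}_{L^1} + \int_0^\infty N_f(t)\log\frac{1}{N_f(t)}\dt .
\]
Chebyshev gives $N_f(t) \le 1/\Phi(t)$. For $t$ large, $s\mapsto s\log(1/s)$ is increasing on $(0,e^{-1}]$, so the tail part is bounded by $\int_{T}^\infty \log\Phi(t)/\Phi(t)\,\dt$. For $t$ below a cutoff we use $N_f\le 1$, giving a contribution $\lesssim T$. The point is to choose the cutoff adapted to $\lambda := \norm{f}_{L^1}$, e.g.\ at the level where $N_f(t)\approx \lambda/t$ is forced (by Chebyshev with $L^1$), to obtain a bound of the form
\[
\norm{f}_{L\log L} \le \lambda\, \Psi(\lambda),
\]
with $\Psi$ explicit in terms of $\Phi$.

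\textbf{Step 2 (definition of $\alpha$).} Using the monotonicity of $\Phi(t)/(t\log t)$ on $[t_0,\infty)$, we bound $\log\Phi(t)/\Phi(t) \lesssim 1/(t\,\beta(t))$, where $\beta(t) := \Phi(t)/(t\log t)$ is increasing. We then define $\alpha(s)$ essentially as $1/$(the $L\log L$-excess bound at ratio $s$), in such a way that $t\alpha(t)$ is increasing and the pairing $\alpha(\norm{f}_{L\log L}/\lambda)\norm{f}_{L\log L}\le 1$ follows from Step~1 by monotonicity. The natural candidate is $\alpha(s)\approx \beta(e^{s})$ up to constants, since a ratio $s$ of $L\log L$ to $L^1$ corresponds to $f$ being concentrated at heights of size about $e^{s}$.

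\textbf{Step 3 (finiteness of $C_\alpha$).} With this choice, the substitution $t=e^{s}$ gives
\[
\int_1^\infty \frac{\dd s}{s\,\beta(e^s)} = \int \frac{\dt}{t\log t\,\beta(t)} = \int\frac{\dt}{\Phi(t)} < \infty,
\]
which matches the hypothesis exactly; this confirms the candidate. The term $1/\alpha(1)$ is finite trivially.

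The main obstacle is Step~1/2: getting the pointwise inequality with the sharp dependence, i.e.\ showing that the ratio $s=\norm f_{L\log L}/\norm f_{L^1}$ being large forces $f$ to have mass at heights about $e^{s}$, where $\Phi$ dominates. We would try a dyadic-in-height decomposition $\{e^{k}<\abs f\le e^{k+1}\}$, estimating both norms as sums over $k$ and using the monotonicity of $\beta$ to compare termwise. Since the paper cites this as Lemma~3.3 of \cite{enma2016}, we may ultimately defer the fine details to that construction.
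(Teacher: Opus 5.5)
\textbf{The sketch in Steps 1--2 has a genuine gap.} First, on the paper's side: it gives no proof of this lemma. It imports the statement as Lemma~3.3 of \cite{enma2016} and explicitly declines to reproduce the construction, so your closing deferral is what the paper does. Your guess $\alpha(s)\approx\beta(e^{s})$, with $\beta(t)=\Phi(t)/(t\log t)$, and your Step~3 substitution are also right.

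The argument you actually sketch would not prove the inequality, however. It rests on the pointwise envelope $N_f(t)\le\min\{1,\lambda/t,1/\Phi(t)\}$, with $\lambda=\norm{f}_{L^1}$, and this envelope loses a logarithm in two places.
\begin{itemize}
\item \emph{Tail.} The bound $N_f\le 1/\Phi$ gives $\int_T^\infty \log\Phi(t)/\Phi(t)\,\dt\approx\int_T^\infty \dt/(t\beta(t))$, which is exactly your Step~2 bound. The hypotheses do not make this finite. The function $\Phi(t)=t\log t\,(\log\log t)^2$ (for large $t$) is doubling, satisfies $\int^\infty\dt/\Phi<\infty$, and has $\beta=(\log\log t)^2$ increasing, yet $\int^\infty \dt/(t(\log\log t)^2)=\infty$.
\item \emph{Middle range.} The bound $N_f\le\lambda/t$ gives $\int_{e\lambda}^{T}\frac{\lambda}{t}\log\frac{t}{\lambda}\,\dt=\frac{\lambda}{2}\bigl(\log^2\frac{T}{\lambda}-1\bigr)$. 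So at the relevant height $\tau$ you get at best $s=\norm{f}_{L\log L}/\lambda\lesssim\log^2\tau$ instead of $\log\tau$. Any $\alpha$ extracted from that bound would again require $\int^\infty \dt/(t\beta(t))<\infty$, i.e.\ $\int^\infty\log t\,\dt/\Phi(t)<\infty$, which is strictly stronger than the hypothesis.
\end{itemize}
The cruder bound $\lesssim T$ below your cutoff is not even proportional to $\lambda$. The dyadic-in-height decomposition you mention at the end is not carried out, so the decisive estimate is missing.

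\textbf{The missing idea} is to use both constraints in integrated form, after truncating at a $\lambda$-dependent height. Normalize $\norm{f}_{L^\Phi}=1$, so that $\int\Phi(|f|)\dw\le 1$. For small $\lambda$, choose $\tau\ge\max\{t_0,e\}$ with $\Phi(\tau)/\tau=\beta(\tau)\log\tau=1/\lambda$.
\begin{itemize}
\item \emph{Below $\tau$.} Jensen for the concave $\psi_0$ gives $\int_0^\tau\psi_0(N_f(t))\dt\le\tau\psi_0(\lambda/\tau)=\lambda\log(e\tau/\lambda)$.
\item \emph{Above $\tau$.} The inequality $x\log\frac1x\le x\log\frac1y+y$ with $y(t)=\lambda\tau/t^2$ gives
\[
\int_\tau^\infty\psi_0(N_f(t))\dt\le 2\int_{\{|f|>\tau\}}|f|\log|f|\dw+\lambda\log\frac1\lambda+2\lambda .
\]
Here the monotonicity of $\beta$ enters as $\Phi(x)\ge\beta(\tau)\,x\log x$ for $x\ge\tau$. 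Hence the last integral is at most $1/\beta(\tau)=\lambda\log\tau$.
\item \emph{Combining.} Doubling gives $\Phi(t)\lesssim t^p$, so $\log\frac1\lambda=\log(\beta(\tau)\log\tau)\lesssim\log\tau$. Therefore $s\le C\log\tau$.
\item \emph{Choice of $\alpha$.} Take $\alpha(\sigma)=C^{-1}\beta(e^{\sigma/C})$, adjusted on a bounded range, which also handles $\lambda$ not small. Since $t\alpha(t)$ is increasing, $s\alpha(s)\le C\log\tau\,\alpha(C\log\tau)=\log\tau\,\beta(\tau)=1/\lambda$, i.e.\ $\alpha(s)\norm{f}_{L\log L}\le 1=\norm{f}_{L^\Phi}$.
\end{itemize}
Your Step~3 then gives $C_\alpha<\infty$.
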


For a measurable set $E$, the conditional weak $(1,1)$ inequality gives
\begin{align*}
	\EE_n\xkh{M_n\chi_E} & = \int_0^1 \EE_n\xkh{\chi_{\dakh{M_n\chi_E > s}}} \dd{s} \\
	& \leq \int_0^1 \min\dakh{1, \frac{\EE_n\xkh{\chi_E}}{s}} \dd{s} \\
	& = \psi_0\xkh{\EE_n\xkh{\chi_E}}.
\end{align*}
Since $u = \int_0^{\infty} \chi_{\dakh{u > t}} \dt$, we have
\begin{align*}
	u_n^M & = \EE_n\xkh{M_n\xkh{\int_0^{\infty} \chi_{\dakh{u > t}} \dt}} \\
	& \leq \int_0^{\infty} \EE_n\xkh{M_n\chi_{\dakh{u > t}}} \dt \\
	& \leq \int_0^{\infty} \psi_0\xkh{N_u^{\cF_n}(t)} \dt = \norm{u}^{\cF_n}_{L\log L}.
\end{align*}
Applying Lemma~\ref{lem:YoungAlpha} to the conditional distribution $N_u^{\cF_n}(\omega, \cdot)$ and using the monotonicity of $t\alpha(t)$, we obtain
\begin{align*}
	\alpha\xkh{\frac{u_n^M}{u_n}} u_n^M & \leq \alpha\xkh{\frac{\norm{u}^{\cF_n}_{L\log L}}{u_n}} \norm{u}^{\cF_n}_{L\log L} \\
	& \leq \norm{u}^{\cF_n}_{L^\Phi}.
\end{align*}

For $\Phi_1$, $\Phi_2$, take $\alpha_1$, $\alpha_2$ as above and set $\alpha = \min\dakh{\alpha_1, \alpha_2}$. Then $t\alpha(t)$ is increasing, and
\begin{gather*}
	C_{\alpha} = \frac{1}{\alpha(1)} + \int_1^{\infty} \frac{1}{t\alpha(t)} \dt \leq C_{\alpha_1} + C_{\alpha_2}, \\
	\alpha\xkh{\frac{u_n^M}{u_n}} u_n^M v_n \leq \alpha_1\xkh{\frac{u_n^M}{u_n}} u_n^M v_n \leq \norm{u}^{\cF_n}_{L^{\Phi_1}} v_n, \\
	\alpha\xkh{\frac{v_n^M}{v_n}} v_n^M u_n \leq \alpha_2\xkh{\frac{v_n^M}{v_n}} v_n^M u_n \leq \norm{v}^{\cF_n}_{L^{\Phi_2}} u_n.
\end{gather*}
Thus Theorem~\ref{thm:1side} implies Theorem~\ref{thm:1side-bump}.

\subsection{A useful function}

In the proof of the separated entropy bump theorem, we shall use a decreasing integrable function $\psi$ satisfying
\[
	\psi(t) \geq \psi(s)\frac{s^2}{t^2}, \qquad t \geq s \geq 1.
\]

To construct that function, we define
\begin{equation}
	\label{def:psi}
	\psi(t) = \frac{\displaystyle 1 + 2\alpha(1)\int_1^t \frac{1}{\alpha(s)} \dd{s}}{t^2}, \qquad t \geq 1.
\end{equation}
Since $t\alpha(t)$ is increasing,
\begin{align*}
	t^2\psi(t) & = 1 + 2\alpha(1)\int_1^t \frac{s}{s\alpha(s)} \dd{s} \\
	& \geq 1 + \frac{2\alpha(1)}{t\alpha(t)}\int_1^t s \dd{s} \\
	& = 1 + \frac{\alpha(1)(t^2 - 1)}{t\alpha(t)} \\
	& \geq \frac{\alpha(1)t^2}{t\alpha(t)}.
\end{align*}
The function $\psi$ is locally absolutely continuous, and
\begin{align*}
	\psi(1) = 1, & \qquad \psi(t) \geq \frac{\alpha(1)}{t\alpha(t)}, \\
	\psi'(t) = \frac{2}{t}\xkh{\frac{\alpha(1)}{t\alpha(t)} - \psi(t)}, & \qquad 0 \leq -\psi'(t) \leq \frac{2\psi(t)}{t}.
\end{align*}
For $t \geq s \geq 1$,
\begin{align*}
	t^2\psi(t) & = 1 + 2\alpha(1)\int_1^t \frac{1}{\alpha(r)} \dd{r} \\
	& \geq 1 + 2\alpha(1)\int_1^s \frac{1}{\alpha(r)} \dd{r} = s^2\psi(s).
\end{align*}
Also,
\begin{align*}
	C_{\psi} := 1 + \int_1^{\infty} \psi(t) \dt & = 2 + 2\alpha(1)\int_1^{\infty} \frac{1}{\alpha(s)} \int_s^{\infty} \frac{1}{t^2} \dt \dd{s} \\
	& = 2 + 2\alpha(1)\int_1^{\infty} \frac{1}{s\alpha(s)} \dd{s} = 2\alpha(1)C_{\alpha}.
\end{align*}

\begin{rem}
	The related scaling condition $t^2 \psi(t) \geq s^2 \psi(s)$ with $t \geq s \geq 1$ is used in Lemma~4.1 of \cite{nrtv2013}. There, $\Psi$ is decreasing, $s\Psi(s)$ is increasing, and
	\[
		\int_0^1 \frac{1}{s\Psi(s)} \dd{s} < \infty.
	\]
	Setting
	\[
		\widetilde{\psi}(t) = \frac{1}{t\Psi\xkh{\frac{1}{t}}}, \quad t \geq 1,
	\]
	we obtain a decreasing integrable function such that $t^2\widetilde{\psi}(t)$ is increasing and $t\widetilde{\psi}(t)$ is decreasing. For their auxiliary function
	\[
		T(A, N) = N \int_0^{\frac{N}{A}} \frac{1}{s\Psi(s)} \dd{s}, \quad 1 \leq A \leq 2, \quad 0 < N \leq 1,
	\]
	the scaling condition gives
	\[
		-\frac{\partial T}{\partial A}(A, N) = \widetilde{\psi}\xkh{\frac{A}{N}} \geq \frac{1}{A^2} \widetilde{\psi}\xkh{\frac{1}{N}} \geq \frac{1}{4} \widetilde{\psi}\xkh{\frac{1}{N}}.
	\]
	They use this estimate to prove the differential inequality for their Bellman function in the Carleson embedding theorem.
\end{rem}

\begin{rem}
	The function $\psi$ can also be obtained from the Bellman function construction in \cite[Section~4.2]{enma2016}. Set $\varphi(t) = \frac{1}{\alpha(1)}$ for $0 < t < 1$ and $\varphi(t) = \frac{1}{t\alpha(t)}$ for $t \geq 1$. In the proof of Lemma~4.3 there, the Bellman function has the form $B(x, t) = x^2 m(t)$, where a nonnegative measure $\mu$ gives
	\begin{gather*}
		\varphi(t) = \int_0^{\infty} \frac{\chi_{[0, r]}(t)}{r} \dd{\mu(r)}, \\
		m(t) = 4 \int_0^{\infty} \frac{r}{r + t} \dd{\mu(r)}, \qquad -m'(t) = 4 \int_0^{\infty} \frac{r}{(r + t)^2} \dd{\mu(r)}.
	\end{gather*}
	Using the same representation, we can rewrite \eqref{def:psi} as
	\begin{align*}
		\psi(t) & = \frac{2\alpha(1)}{t^2} \int_0^t s\varphi(s) \dd{s} \\
		& = \frac{2\alpha(1)}{t^2} \int_0^{\infty} \frac{1}{r} \int_0^{\min\dakh{t, r}} s \dd{s} \dd{\mu(r)} \\
		& = \alpha(1) \int_0^{\infty} \frac{r}{\max\dakh{r, t}^2} \dd{\mu(r)}, \qquad t \geq 1.
	\end{align*}
	Since $\max\dakh{r, t} \leq r + t \leq 2\max\dakh{r, t}$, we obtain
	\[
		\frac{\alpha(1)}{4}\xkh{-m'(t)} \leq \psi(t) \leq \alpha(1)\xkh{-m'(t)}.
	\]
	The monotonicity of $\psi(t)$ and of $t^2\psi(t)$ also follows directly from this integral representation.
\end{rem}

We define
\[
	G(r) = \int_1^r \psi(t) \dt, \quad G(\infty) = C_{\psi} - 1,
\]
and
\[
	d(s, r) = G(r) - G(s) - \psi(r)(r - s), \qquad 1 \leq s, r < \infty.
\]
The function $G$ is increasing and concave, so $d(s, r) \geq 0$. 

\begin{lem}
	\label{lem:d}
	If $s \geq 1$, $r \in [1, \infty]$ and $\psi(s) > 2\psi(r)$, then
	\[
		d(s, r) \geq \frac{s\psi(s)}{48}.
	\]
\end{lem}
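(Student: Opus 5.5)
Since $\psi$ is decreasing and $\psi(s) > 2\psi(r) \geq \psi(r)$, we must have $r > s$. The plan is to write $d(s,r)$ as an integral of a nonnegative quantity and bound that integrand from below on a short interval to the right of $s$, using only the scaling inequality $\psi(t) \geq \psi(s)\frac{s^2}{t^2}$ for $t \geq s \geq 1$.

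\emph{Step 1: integral form of $d$.} For finite $r$,
\[
	d(s,r) = \int_s^r \xkh{\psi(t) - \psi(r)} \dt ,
\]
and the integrand is nonnegative because $\psi$ is decreasing. For $r = \infty$ I would set $\psi(\infty) = 0$ and read $d(s,\infty)$ as $G(\infty) - G(s) = \int_s^\infty \psi(t)\dt$. To justify this reading, note that for a decreasing integrable $\psi$,
\[
	\tfrac{t}{2}\psi(t) \leq \int_{t/2}^{t}\psi ,
\]
so $\psi(r)(r-s) \to 0$ as $r \to \infty$. Then the same integral formula holds with $\psi(r)$ replaced by $0$.

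\emph{Step 2: $r$ is far from $s$.} Combining the hypothesis with the scaling inequality gives
\[
	\frac{\psi(s)}{2} > \psi(r) \geq \psi(s)\frac{s^2}{r^2}.
\]
Hence $r > \sqrt{2}\,s$; this is automatic when $r = \infty$. Consequently $[s, \sqrt2 s] \subset [s, r]$.

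\emph{Step 3: lower bound on $[s,\sqrt2 s]$.} On this interval, again by scaling and $\psi(r) < \psi(s)/2$,
\[
	\psi(t) - \psi(r) \geq \psi(s)\xkh{\frac{s^2}{t^2} - \frac12} \geq 0 .
\]
Since the integrand is nonnegative on all of $[s,r]$, we may drop the part over $[\sqrt2 s, r]$. Integrating the remaining part gives
\[
	d(s,r) \geq \psi(s)\int_s^{\sqrt2 s}\xkh{\frac{s^2}{t^2} - \frac12}\dt = s\psi(s)\xkh{1 - \tfrac{1}{\sqrt2} - \tfrac{\sqrt2 - 1}{2}} = s\psi(s)\,\frac{3 - 2\sqrt2}{2}.
\]
The constant $\frac{3 - 2\sqrt2}{2} \approx 0.0858$ exceeds $\frac{1}{48}$, which proves the lemma.

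The only point needing care is the case $r = \infty$, namely the convention for $d(s,\infty)$ and the limit $\psi(r) r \to 0$ handled in Step 1. The rest is a direct computation.
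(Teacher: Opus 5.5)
Your proof is correct and follows the paper's argument: write $d(s,r)=\int_s^r(\psi(t)-\psi(r))\,\mathrm{d}t$, use the scaling inequality to get $r>\sqrt{2}\,s$, and bound the integrand from below on an initial subinterval. The differences are minor. The paper integrates the constant bound $\psi(t)\ge\frac{9}{16}\psi(s)$ over $[s,\frac{4}{3}s]$, which gives exactly $\frac{1}{48}$. You instead integrate the pointwise bound $\psi(s)\bigl(\frac{s^2}{t^2}-\frac12\bigr)$ over $[s,\sqrt{2}\,s]$, which gives the sharper constant $\frac{3-2\sqrt{2}}{2}$, and you make the $r=\infty$ convention explicit, which the paper leaves implicit.
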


\begin{proof}
	By the monotonicity of $\psi$ and the comparison above, we have
	\begin{align*}
		r > s \geq 1, & \quad \psi(r) \geq \psi(s) \frac{s^2}{r^2} > 2 \psi(r) \frac{s^2}{r^2}, \quad r > \sqrt{2}s > \frac{4s}{3}; \\
		t \in \xkh{s, \frac{4}{3} s}, & \quad \psi(t) \geq \psi\xkh{\frac{4}{3} s} \geq \frac{9}{16} \psi(s).
	\end{align*}
	Thus,
	\begin{align*}
		d(s, r) & = \int_s^r \xkh{\psi(t) - \psi(r)} \dt \\
		& \geq \int_s^{\frac{4}{3}s} \xkh{\psi(t) - \psi(r)} \dt \\
		& \geq \int_s^{\frac{4}{3}s} \xkh{\frac{9}{16}\psi(s) - \frac{1}{2}\psi(s)} \dt \\
		& = \int_s^{\frac{4}{3}s} \frac{\psi(s)}{16} \dt = \frac{s\psi(s)}{48}. \qedhere
	\end{align*}
\end{proof}

\begin{rem}
	Theorem~2.1.5 of \cite{nesterov2018} gives a stronger form of Lemma~\ref{lem:d}. For $s \leq x \leq y \leq r$,
	\[
		0 \leq \psi(x) - \psi(y) \leq \psi(x)\xkh{1 - \frac{x^2}{y^2}} \leq \frac{2\psi(s)}{s}(y - x).
	\]
	Thus $-G$ has a $2\psi(s)/s$-Lipschitz derivative on $[s, r]$. Applying the theorem to $-G$, extended linearly outside $[s, r]$, we obtain
	\[
		d(s, r) \geq \frac{s\xkh{\psi(s) - \psi(r)}^2}{4\psi(s)} > \frac{s\psi(s)}{16}, \qquad \psi(s) > 2\psi(r).
	\]
\end{rem}

\subsection{A simple application}

Let $\dmu = u\dw$ and $\EE_{\mu}\xkh{X \mid \cF_n} = \frac{\EE_n\xkh{u X}}{u_n}$ for a nonnegative function $X$. Since $M_{n + 1}u \leq M_nu$,
\begin{align*}
	\EE_{\mu}\xkh{\rho_{n + 1} \mid \cF_n} & = \frac{\EE_n\xkh{u_{n + 1} \rho_{n + 1}}}{u_n} \\
	& = \frac{\EE_n\xkh{M_{n + 1}u}}{u_n} \leq \frac{\EE_n\xkh{M_nu}}{u_n} = \rho_n.
\end{align*}
By the concavity and monotonicity of $G$,
\[
	\EE_{\mu}\xkh{G(\rho_{n + 1}) \mid \cF_n} \leq G\xkh{\EE_{\mu}\xkh{\rho_{n + 1} \mid \cF_n}} \leq G(\rho_n).
\]

We define
\[
	e_n = G(\rho_n) - \EE_{\mu}\xkh{G(\rho_{n + 1}) \mid \cF_n} \geq 0. 
\]
Multiplying the definition of $e_n$ by $u_n$, we have
\begin{align*}
	u_n e_n & = u_n G(\rho_n) - u_n \EE_{\mu}\xkh{G(\rho_{n + 1}) \mid \cF_n} \\
	& = u_n G(\rho_n) - \EE_n\xkh{u_{n + 1} G(\rho_{n + 1})} \leq (C_{\psi} - 1)u_n,
\end{align*}
and summing gives
\begin{equation}
	\label{eq:defect-total}
	\begin{aligned}
		\EE_{\ell}\xkh{\sum_{n \geq \ell} u_n e_n} & = \sum_{n \geq \ell} \xkh{\EE_{\ell}\xkh{u_n G(\rho_n)} - \EE_{\ell}\xkh{u_{n + 1} G(\rho_{n + 1})}} \\
		& = u_{\ell} G(\rho_{\ell}) - \EE_{\ell}\xkh{\lim_{n \to \infty} u_n G(\rho_n)} \\
		& \leq (C_{\psi} - 1)u_{\ell}.
	\end{aligned}
\end{equation}

Next, we consider another estimate that follows directly from the preceding discussion.

\begin{cor}
	\label{cor:stopped-defect}
	Let $\sigma \leq \tau$ be stopping times. Then
	\begin{align}
		\EE_{\sigma}\xkh{\chi_{\dakh{\tau < \infty}} u_{\tau} d\xkh{\rho_{\tau}, \rho_{\sigma}}} & \leq \EE_{\sigma}\xkh{\sum_{\sigma \leq n < \tau} u_n e_n}, \label{eq:stopped-defect} \\
		\EE_{\sigma}\xkh{\chi_{\dakh{\tau < \infty, \psi(\rho_{\tau}) > 2 \psi(\rho_{\sigma})}} u_{\tau} \rho_{\tau} \psi\xkh{\rho_{\tau}}} & \leq 48 \cdot \EE_{\sigma}\xkh{\sum_{\sigma \leq n < \tau} u_n e_n}. \label{eq:stopped-jump}
	\end{align}
\end{cor}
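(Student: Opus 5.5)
The plan is to compare the stopped quantity $u_\tau d(\rho_\tau,\rho_\sigma)$ with the telescoping sum of the defects $e_n$ between $\sigma$ and $\tau$, and to use that $d(\cdot,\rho_\sigma)$ is a supporting-line defect of the concave function $G$. Fix $\sigma\le\tau$ and work on $\{\sigma=m\}$. By the usual localization it suffices to treat $\sigma=m$ deterministic, since both sides of \eqref{eq:stopped-defect} are $\cF_\sigma$-measurable quantities that decompose over the atoms $\{\sigma=m\}$. Set $r=\rho_m$, which is $\cF_m$-measurable, and introduce the affine function
\[
L(x)=G(r)+\psi(r)(x-r).
\]
Concavity of $G$ gives $G\le L$. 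Moreover $d(x,r)=L(x)-G(x)$, after checking the sign convention: $d(s,r)=G(r)-G(s)-\psi(r)(r-s)=L(s)-G(s)$. Hence the target quantity is $u_\tau\bigl(L(\rho_\tau)-G(\rho_\tau)\bigr)$.

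\textbf{Key steps.} First, consider the process $X_n=u_n\bigl(L(\rho_n)-G(\rho_n)\bigr)$ for $n\ge m$, which vanishes at $n=m$. Using $u_n\rho_n=u_n^M$, we have $u_nL(\rho_n)=u_n\bigl(G(r)-\psi(r)r\bigr)+\psi(r)u_n^M$. Now $u_n$ is a martingale and $\EE_n(u_{n+1}^M)=\EE_n(M_{n+1}u)\le u_n^M$, so $u_nL(\rho_n)$ is a supermartingale in $n\ge m$. Here the coefficients are $\cF_m$-measurable, and $G(r)-\psi(r)r$ may be negative, but this term multiplies a martingale, so that is harmless. Second, by the definition of $e_n$, the process $u_nG(\rho_n)+\sum_{m\le k<n}u_ke_k$ is a martingale. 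Combining the two,
\[
\EE_n(X_{n+1})-X_n\le u_ne_n,
\]
so $X_n-\sum_{m\le k<n}u_ke_k$ is a supermartingale starting at $0$. Third, apply optional stopping at $\tau\wedge N$, which gives
\[
\EE_m\bigl(X_{\tau\wedge N}\bigr)\le\EE_m\Bigl(\sum_{m\le k<\tau\wedge N}u_ke_k\Bigr).
\]
Since $X\ge0$, we may drop the part $\{\tau>N\}$ and keep $\chi_{\{\tau\le N\}}X_\tau$. Letting $N\to\infty$ by monotone convergence on both sides yields \eqref{eq:stopped-defect}.

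For \eqref{eq:stopped-jump}, on the event $\{\tau<\infty,\ \psi(\rho_\tau)>2\psi(\rho_\sigma)\}$ apply Lemma~\ref{lem:d} with $s=\rho_\tau$ and $r=\rho_\sigma$. This gives $d(\rho_\tau,\rho_\sigma)\ge\rho_\tau\psi(\rho_\tau)/48$. Then bound the indicator-restricted left side by $48$ times the left side of \eqref{eq:stopped-defect}, using $d\ge0$ off the event.

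\textbf{Main obstacle.} The delicate point is integrability in the optional-stopping step. The terms $\psi(r)u_n^M$ and $u_nG(\rho_n)$ need not be a priori integrable, because $u_n^M$ may be infinite unless $u\in L\log L$ locally. If needed, I would first truncate by replacing $u$ with $\min\{u,K\}$ or restrict to $\{\psi(r)\le K\}\in\cF_m$, and note that $G$ is bounded by $C_\psi-1$. I would also handle the points where $u_n=0$, where $\rho_n=1$ by convention, and check that the one-step inequality still holds there. Then I would pass to the limit by monotone convergence. Everything else is the direct supporting-line computation.
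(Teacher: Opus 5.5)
Your proposal is correct and is essentially the paper's argument. The paper writes $d(\rho_\tau,\rho_\sigma)$ as the supporting-line defect of the concave function $G$ at $\rho_\sigma$, uses that $\rho_n$ is a $\mu$-supermartingale and that $e_n$ is the $\mu$-compensator of $G(\rho_n)$, and then multiplies by $u_\sigma$; this is exactly your computation with $u_n\rho_n=u_n^M$ and $u_nG(\rho_n)+\sum_{m\le k<n}u_ke_k$, written in the base measure and with explicit optional stopping at $\tau\wedge N$ followed by monotone convergence, which neatly handles $\{\tau=\infty\}$. For integrability, localizing on $\{u_m^M\le K\}\in\cF_m$ and using $\EE_m(u_n^M)\le u_m^M$ is enough. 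Truncating $u$ itself would change $u_n$, $\rho_n$ and $e_n$, and is best avoided; the restriction to $\{\psi(r)\le K\}$ is vacuous since $\psi\le 1$.
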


To prove this corollary, we only need to note that
\begin{align*}
	\EE_{\mu}\xkh{\chi_{\dakh{\tau < \infty, \psi(\rho_{\tau}) > 2\psi(\rho_{\sigma})}} \frac{\rho_{\tau} \psi(\rho_{\tau})}{48} \mid \cF_{\sigma}} & \leq \EE_{\mu}\xkh{\chi_{\dakh{\tau < \infty}} d\xkh{\rho_{\tau}, \rho_{\sigma}} \mid \cF_{\sigma}} \\
	& \leq G(\rho_{\sigma}) - \EE_{\mu}\xkh{G(\rho_{\tau}) \mid \cF_{\sigma}} \\
	& = \EE_{\mu}\xkh{\sum_{\sigma \leq n < \tau} e_n \mid \cF_{\sigma}},
\end{align*}
and multiply by $u_{\sigma}$.

\section{Carleson Embedding}
\label{sec:embedding}

We prove the embedding theorem for a nonnegative decreasing function $\varphi$, so that it can be applied both to $\varphi(t) = \frac{1}{t\alpha(t)}$ and to the function $\psi$ above.

\begin{thm}
	\label{spEMB}
	Let $a = (a_k)_{k \in \NN}$ be a Carleson sequence and let $\varphi \colon [1, \infty) \to [0, \infty)$ be non-increasing with
	\[
		C_{\varphi} := \varphi(1) + \int_1^{\infty} \varphi(t) \dt < \infty.
	\]
	Then for any $f \in L^2(u)$ and $\ell \in \NN$,
	\begin{equation}
		\label{eq:emb}
		\EE_{\ell}\xkh{\sum_{n \geq \ell} a_n \frac{\abs{\EE_n\xkh{f u}}^2}{u_n} \varphi(\rho_n)} \leq 4 C_{\varphi} \norm{a}_{\car} \EE_{\ell}\xkh{\abs{f}^2 u}.
	\end{equation}
\end{thm}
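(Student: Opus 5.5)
We will prove \eqref{eq:emb} by a Bellman function argument with three variables: the $\mu$-average of $f$, the $\mu$-average of $\abs{f}^2$, and $\rho$. Throughout, write $x_n = \EE_n\xkh{fu}/u_n$, $X_n = \EE_n\xkh{\abs{f}^2 u}/u_n$, and $A_n = \EE_n\xkh{\sum_{k \geq n} a_k}$. Then $x_n^2 \leq X_n$, $\rho_n \geq 1$, and $0 \leq A_n \leq \norm{a}_{\car}$, and the sum to be controlled is $\sum_{n \geq \ell} a_n u_n x_n^2 \varphi(\rho_n)$. By homogeneity we may take $\norm{a}_{\car} = 1$. Following the remark on the Bellman function of \cite[Lemma~4.3]{enma2016}, we look for a function $B(x, X, A, \rho)$ with the following properties:
\begin{itemize}
	\item[(i)] $0 \leq B \leq 4C_{\varphi} X$;
	\item[(ii)] the main inequality
	\[
		u_n B(x_n, X_n, A_n, \rho_n) \geq a_n u_n x_n^2 \varphi(\rho_n) + \EE_n\xkh{u_{n + 1} B(x_{n + 1}, X_{n + 1}, A_{n + 1}, \rho_{n + 1})}.
	\]
\end{itemize}
Iterating (ii) from $\ell$ to $N$, dropping the nonnegative remainder and letting $N \to \infty$ gives \eqref{eq:emb}, since $u_\ell X_\ell = \EE_\ell\xkh{\abs{f}^2 u}$.

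The concrete choice is $B = 4C_{\varphi} X - x^2 m(\rho)/(1 + A)$, or a variant of this form. Here $m$ is positive, non-increasing in $\rho$, and satisfies $m \leq 2C_\varphi$, so that $B \geq 0$. The key structural facts are the following.
\begin{itemize}
	\item $\EE_n\xkh{u_{n+1}X_{n+1}} = u_n X_n$, so the $X$-term is a martingale in the measure $\mu$.
	\item $\EE_\mu\xkh{x_{n+1} \mid \cF_n} = x_n$, so $\EE_n\xkh{u_{n+1}x_{n+1}^2} \geq u_n x_n^2$ by Jensen.
	\item $A_n = a_n + \EE_n\xkh{A_{n+1}}$, so the Carleson quantity drops by $a_n$ in conditional mean.
	\item $\EE_\mu\xkh{\rho_{n+1} \mid \cF_n} \leq \rho_n$, as computed in Section~\ref{sec:tools}.
\end{itemize}
The $x^2/(1+A)$ structure is the standard Carleson embedding Bellman function, and the gain from the drop of $A$ should pay for $a_n x_n^2$ times a factor comparable to $m(\rho_n)$. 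One therefore wants $m(\rho) \gtrsim \varphi(\rho)$ (up to the factor $4$), together with enough convexity and monotonicity in $\rho$ that the possibly increasing $\rho_{n+1}$ does not destroy the inequality.

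The main obstacle is the interaction between $x$ and $\rho$. The map $(x, \rho) \mapsto x^2 m(\rho)$ must be handled so that, jointly under $\mu$-conditional expectation, the term $-\EE_\mu\xkh{x_{n+1}^2 m(\rho_{n+1})}$ is at most $-x_n^2 m(\rho_n)$, using only the supermartingale property of $\rho$ and the martingale property of $x$. I would first try the representation from the remark: write $\varphi(t) = \int \chi_{[0,r]}(t) r^{-1} \dd{\mu(r)}$ and $m(t) = 4\int r/(r+t) \dd{\mu(r)}$. This reduces the problem to the single kernels $x^2 r/(r+\rho)$, and one checks that $(x, \rho) \mapsto x^2/(r+\rho)$ is jointly convex, which is the quadratic-over-linear structure. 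Convexity plus the fact that this kernel is decreasing in $\rho$ then gives
\[
	\EE_\mu\xkh{x_{n+1}^2/(r+\rho_{n+1}) \mid \cF_n} \geq x_n^2/(r + \EE_\mu\xkh{\rho_{n+1} \mid \cF_n}) \geq x_n^2/(r+\rho_n),
\]
which is exactly the needed direction.

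To incorporate $A$, I would instead use kernels of the form $x^2/(r + \rho + \text{(something in } A))$, or simply split the argument. Control the sum in $n$ separately for each $r$ via the classical Carleson embedding applied to the measure and the stopping structure $\{\rho_n \leq r\}$. Then integrate in $\dd{\mu(r)}$, and finally compare constants using $\int r^{-1} \cdot r \,\dd{\mu(r)} \lesssim C_\varphi$, to recover the constant $4C_{\varphi}$.
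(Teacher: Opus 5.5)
There is a genuine gap. Nothing in your argument links $\rho_n$ to the Carleson sequence $a$ under the weighted measure $u\,d\omega$, and without such a link the statement is false. Every structural fact you list survives if $\rho_n$ is replaced by the constant $1$: the $\mu$-martingale property of $x_n$ and $X_n$, the identity $A_n=a_n+\EE_n(A_{n+1})$ with $0\le A_n\le 1$, and $\EE_\mu(\rho_{n+1}\mid\cF_n)\le\rho_n$ with $\rho_n\ge 1$. But then \eqref{eq:emb} with $f=1$ would give $\EE_\ell(\sum_{n\ge\ell}a_nu_n)\lesssim u_\ell$ for every weight, and this fails. On $[0,1)$ with the dyadic filtration, take $u=2^N\chi_{[0,2^{-N})}$ and $a_n=\chi_{[0,2^{-n})}$. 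Then $\|a\|_{\car}=2$, but $\EE_0(\sum_{n\le N}a_nu_n)=N+1$ while $u_0=1$. So (ii) cannot follow from those facts alone, for your $B$ or any variant in the variables $(x,X,A,\rho)$.

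The concrete failure is in the $A$-variable. It drops by $a_n$ only in the unweighted mean $\EE_n$, whereas (ii) concerns $\EE_n(u_{n+1}\,\cdot\,)=u_n\EE_\mu(\,\cdot\mid\cF_n)$. The quantity $\EE_\mu(A_{n+1}\mid\cF_n)$ can be far larger than $A_n-a_n$. The joint convexity of $x^2/(r+\rho)$, which you flag as the main obstacle, is not where the difficulty lies.

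The missing idea is Lemma~\ref{lem:carPleqMAX}: $\EE_n(\sum_{k\ge n}a_ku_k)\le\|a\|_{\car}u_n^M$. With $\|a\|_{\car}=1$, this says the \emph{weighted} Carleson quantity $y_n=\EE_\mu(\sum_{k\ge n}a_k\mid\cF_n)$ satisfies $y_n\le\rho_n$. The paper uses $y_n$ in place of your $A_n$, since it satisfies $\EE_\mu(y_{n+1}\mid\cF_n)=y_n-a_n$ exactly. It then runs a two-variable Bellman argument with $B(x,y)$ convex, $-\partial_yB\ge x^2\varphi(y)$ and $0\le B\le 4C_\varphi x^2$. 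This is the kernel $x^2m(\cdot)$ you wrote down, but evaluated at $y_n$ rather than at $\rho_n$. Finally it passes from $\varphi(y_n)$ to $\varphi(\rho_n)$ using that $\varphi$ is non-increasing, so $\rho$ is never a Bellman variable.

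Your fallback ``split'' route can also be completed, but only with the same lemma in localized form. Stop at $\sigma=\inf\{k\ge n:\rho_k\le r\}$ and apply the lemma at time $\sigma$; this gives $\EE_n(\sum_{k\ge n}a_k\chi_{\{\rho_k\le r\}}u_k)\le r\|a\|_{\car}u_n$. Hence $a_k\chi_{\{\rho_k\le r\}}$ is $\mu$-Carleson with norm at most $r\|a\|_{\car}$. Two further ingredients then give exactly $4C_\varphi\|a\|_{\car}$. One is the weighted embedding with constant $4$, which is the lemma under $\mu$ plus conditional Doob. The other is the layer-cake identity $\int_0^\infty|\{t\ge0:\varphi(t)>s\}|\,ds=C_\varphi$, with $\varphi=\varphi(1)$ on $[0,1)$. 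As written, your proposal states neither this bound nor anything equivalent, and it is the only place where the definition of $\rho_n$ through $M_nu$ actually enters.
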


\begin{lem}
	\label{lem:carPleqMAX}
	Let $a = (a_k)_{k \in \NN}$ be a Carleson sequence and let $(u_k)_{k \in \NN}$ be a nonnegative adapted sequence. For any $n \in \NN$, we have
	\[
		\qw{\sum_{k \geq n} a_k u_k}{\cF_n} \leq \norm{a}_{\car} \qw{\sup_{k \geq n} u_k}{\cF_n}.
	\]
	In particular,
	\[
		\qw{\sum_{k \geq n} a_k u_k}{\cF_n} \leq \norm{a}_{\car} u_n^M.
	\]
\end{lem}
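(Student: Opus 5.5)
The plan is to reduce the statement to the definition of the Carleson norm by a layer-cake decomposition of $\sup_{k\ge n} u_k$, one stopping time per level. Fix $n$ and write $U = \sup_{k \geq n} u_k$.

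For $\lambda > 0$, define the stopping time
\[
	\tau_\lambda = \inf\dakh{k \geq n : u_k > \lambda}.
\]
It is a stopping time because $(u_k)$ is adapted. Since $u_k = \int_0^\infty \chi_{\dakh{u_k > \lambda}} \dd{\lambda}$ and $\dakh{u_k > \lambda} \subset \dakh{\tau_\lambda \leq k}$, we get
\[
	\sum_{k \geq n} a_k u_k \leq \int_0^\infty \sum_{k \geq n} a_k \chi_{\dakh{\tau_\lambda \leq k}} \dd{\lambda}.
\]
By Tonelli for conditional expectations, it then suffices to prove, for each $\lambda$,
\[
	\EE_n\xkh{\sum_{k \geq \tau_\lambda} a_k} \leq \norm{a}_{\car} \EE_n\xkh{\chi_{\dakh{\tau_\lambda < \infty}}}.
\]
Integrating this in $\lambda$ gives exactly $\norm{a}_{\car}\, \EE_n U$, because $\dakh{\tau_\lambda < \infty} = \dakh{U > \lambda}$ and $\int_0^\infty \chi_{\dakh{U > \lambda}} \dd{\lambda} = U$.

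For the stopped estimate, decompose over the values of $\tau := \tau_\lambda$:
\[
	\sum_{k \geq \tau} a_k = \sum_{m \geq n} \chi_{\dakh{\tau = m}} \sum_{k \geq m} a_k.
\]
Since $\dakh{\tau = m} \in \cF_m$, the tower property gives
\[
	\EE_n\xkh{\chi_{\dakh{\tau = m}} \sum_{k \geq m} a_k} = \EE_n\xkh{\chi_{\dakh{\tau = m}} \EE_m\xkh{\sum_{k \geq m} a_k}} \leq \norm{a}_{\car} \EE_n\xkh{\chi_{\dakh{\tau = m}}}.
\]
Summing over $m$ with monotone convergence yields the claim. All quantities are nonnegative, so conditional expectations of possibly infinite sums are handled by monotone convergence; there is no integrability issue.

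For the \emph{in particular}, take $u_k = \EE_k u$. Then $\sup_{k \geq n} u_k = M_n u$, hence $\EE_n(\sup_{k\ge n} u_k) = u_n^M$. No single step is hard; the only care needed is the measurability of the $\lambda$-integrand, which is jointly measurable in $(\omega, \lambda)$, so Fubini applies to conditional expectations.
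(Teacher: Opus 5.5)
Your proof is correct and follows essentially the same route as the paper: the level stopping times $\tau_\lambda = \inf\{k \geq n : u_k > \lambda\}$, the decomposition over $\{\tau_\lambda = m\} \in \mathcal{F}_m$ with the tower property and the Carleson bound, and the conditional layer-cake formula to integrate in $\lambda$. Your derivation of the ``in particular'' clause via $u_k = \EE_k(u)$ and $\sup_{k \geq n} u_k = M_n u$ fills in the step the paper leaves implicit.
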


\begin{proof}[Proof of Lemma~\ref{lem:carPleqMAX}]
	Recall that the Carleson condition gives
	\[
		\qw{\sum_{k \geq n} a_k}{\cF_n} \leq \norm{a}_{\car}.
	\]
	Consider the stopping times $\tau_t = \inf\dakh{k \geq n : u_k > t}$. Since $\dakh{\tau_t = j} \in \cF_j$ for $j \geq n$, we have
	\begin{align*}
		\qw{\sum_{k \geq n} a_k \chi_{\dakh{u_k > t}}}{\cF_n} & \leq \sum_{j \geq n} \qw{\chi_{\dakh{\tau_t = j}} \sum_{k \geq j} a_k}{\cF_n} \\
		& = \sum_{j \geq n} \qw{\chi_{\dakh{\tau_t = j}} \qw{\sum_{k \geq j} a_k}{\cF_j}}{\cF_n} \\
		& \leq \norm{a}_{\car} \qw{\chi_{\dakh{\tau_t < \infty}}}{\cF_n}.
	\end{align*}
	Since $\dakh{\tau_t < \infty} = \dakh{\sup_{k \geq n} u_k > t}$, the conditional layer cake formula gives
	\begin{align*}
		\qw{\sum_{k \geq n} a_k u_k}{\cF_n} & = \int_0^{\infty} \qw{\sum_{k \geq n} a_k \chi_{\dakh{u_k > t}}}{\cF_n} \dt \\
		& \leq \norm{a}_{\car} \int_0^{\infty} \qw{\chi_{\dakh{\tau_t < \infty}}}{\cF_n} \dt \\
		& = \norm{a}_{\car} \qw{\sup_{k \geq n} u_k}{\cF_n}. \qedhere
	\end{align*}
\end{proof}

\begin{proof}[Proof of Theorem~\ref{spEMB}]
	Extend $\varphi$ to $[0, 1)$ by $\varphi(t) = \varphi(1)$. By Lemma~4.3 and Section~4.2 of \cite{enma2016}, there is a Bellman function $B(x, y)$ for $x \in \RR$, $y \geq 0$ such that
	\begin{gather*}
		\text{$B$ is convex;} \\
		-\frac{\partial B}{\partial y}(x, y) \geq x^2\varphi(y); \\
		0 \leq B(x, y) \leq 4 C_{\varphi} x^2.
	\end{gather*}
	We use the function constructed there, with $B(x, \infty) = 0$.
	
	Without loss of generality, we may assume that $\norm{a}_{\car} = 1$. We first consider real-valued $f$. Let $\dmu = u\dw$ and
	\[
		x_n = \EE_{\mu}\xkh{f \mid \cF_n} = \frac{\EE_n\xkh{f u}}{u_n}, \quad y_n = \EE_{\mu}\xkh{\sum_{k \geq n} a_k \mid \cF_n}.
	\]
	Then
	\[
		\EE_{\mu}\xkh{x_{n + 1} \mid \cF_n} = x_n, \quad \EE_{\mu}\xkh{y_{n + 1} \mid \cF_n} = y_n - a_n.
	\]
	Lemma~\ref{lem:carPleqMAX} gives
	\[
		y_n = \frac{\qw{u \sum_{k \geq n} a_k}{\cF_n}}{u_n} = \frac{\qw{\sum_{k \geq n} a_k u_k}{\cF_n}}{u_n} \leq \frac{u_n^M}{u_n} = \rho_n.
	\]
	By Jensen's inequality,
	\begin{align*}
		\EE_{\mu}\xkh{B(x_{n + 1}, y_{n + 1}) \mid \cF_n} - B\xkh{x_n, y_n} & \geq B\xkh{\EE_{\mu}\xkh{x_{n + 1} \mid \cF_n}, \EE_{\mu}\xkh{y_{n + 1} \mid \cF_n}} - B\xkh{x_n, y_n} \\
		& = B(x_n, y_n - a_n) - B\xkh{x_n, y_n} \\
		& \geq - \frac{\partial B}{\partial y}(x_n, y_n) a_n \\
		& \geq x_n^2\varphi(y_n) a_n \geq x_n^2\varphi(\rho_n) a_n. 
	\end{align*}
	The upper bound for $B$ gives
	\[
		0 \leq B(x_n, y_n) \leq 4 C_{\varphi} x_n^2 \leq 4 C_{\varphi} \EE_{\mu}\xkh{\abs{f}^2 \mid \cF_n}.
	\]
	Thus $B(x_n, y_n)$ is a uniformly integrable submartingale, and $x_n$ converges in $L^2(\mu)$ to $x_{\infty} = \EE_{\mu}\xkh{f \mid \cF_{\infty}}$, where $\cF_{\infty} = \bigvee_{n \in \NN}\cF_n$. Summing over $n \geq \ell$, we obtain
	\begin{align*}
		\EE_{\ell}\xkh{\sum_{n \geq \ell} a_n \frac{\abs{\EE_n\xkh{f u}}^2}{u_n} \varphi(\rho_n)} & = u_{\ell} \EE_{\mu}\xkh{\sum_{n \geq \ell} a_n x_n^2 \varphi(\rho_n) \mid \cF_{\ell}} \\
		& \leq u_{\ell} \EE_{\mu}\xkh{\sum_{n \geq \ell} \xkh{\EE_{\mu}\xkh{B(x_{n + 1}, y_{n + 1}) \mid \cF_n} - B(x_n, y_n)} \mid \cF_{\ell}} \\
		& = u_{\ell} \xkh{\EE_{\mu}\xkh{\lim_{n \to \infty} B(x_n, y_n) \mid \cF_{\ell}} - B(x_{\ell}, y_{\ell})} \\
		& \leq 4 C_{\varphi} u_{\ell} \EE_{\mu}\xkh{x_{\infty}^2 \mid \cF_{\ell}} \leq 4 C_{\varphi} \EE_{\ell}\xkh{\abs{f}^2 u}.
	\end{align*}
	This proves \eqref{eq:emb} in the normalized case. Scaling $a$ gives the factor $\norm{a}_{\car}$. 
\end{proof}

Taking $f = 1$ in \eqref{eq:emb}, we obtain the estimate
\begin{equation}
	\label{eq:emb-simple}
	\EE_{\ell}\xkh{\sum_{n \geq \ell} a_n u_n \varphi(\rho_n)} \leq 4 C_{\varphi} \norm{a}_{\car} u_{\ell},
\end{equation}
which will be used with $\varphi = \psi$.

\section{Sawyer-type Testing Conditions}
\label{sec:sawyer}

For sparse operators in the atomic setting, the proof in \cite{enma2016} uses Sawyer-type testing conditions
\[
	\int_{I} \abs{T(\chi_I u)}^2 v \dw \leq S \int_{I} u \dw, \qquad \int_{I} \abs{T(\chi_I v)}^2 u \dw \leq S \int_{I} v \dw,
\]
where $S < \infty$ is a constant. We will prove an analogous result in the general filtration setting, formulated in terms of what may be called a conditional Sawyer-type testing condition.

\begin{lem}
	\label{lem:sawyer}
	Let $a = \xkh{a_n}_{n \in \NN}$ be a nonnegative adapted sequence and let $T$, $T_{\geq n}$ be the operators defined by
	\[
		T f = \sum_{n \in \NN} a_n \EE_n\xkh{f}, \quad T_{\geq n} f = \sum_{k \geq n} a_k \EE_k\xkh{f}.
	\]
	
	The conditional Sawyer-type testing conditions mean that for every $n \in \NN$, the weights $u$, $v$ satisfy
	\[
		\norm{\sqrt{v} T_{\geq n} u}_{L^2}^{\cF_n} \leq S_u \sqrt{u_n}, \quad \norm{\sqrt{u} T_{\geq n} v}_{L^2}^{\cF_n} \leq S_v \sqrt{v_n}.
	\]
	Equivalently,
	\begin{align}
		\EE_{n}\xkh{v \xkh{\sum_{k \geq n} a_k u_k}^2} & \leq S_u^2 u_n, \\
		\EE_{n}\xkh{u \xkh{\sum_{k \geq n} a_k v_k}^2} & \leq S_v^2 v_n.
	\end{align}
	
	Then the operator $f \mapsto T(f u)$ is bounded from $L^2(u)$ to $L^2(v)$ if and only if $u$, $v$ satisfy the conditional Sawyer-type testing conditions.
\end{lem}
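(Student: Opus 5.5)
The proof splits into necessity and sufficiency. Throughout, write $\dmu = u\dw$, $\dnu = v\dw$, and work with $f, g \geq 0$; positivity of $T$ makes this reduction harmless. The identity $\EE_k\xkh{fu} = u_k\,\EE_{\mu}\xkh{f \mid \cF_k}$ is used repeatedly. The dual form is
\[
	\int_\Omega T(fu)\, g v \dw = \sum_{k} \int_\Omega a_k \EE_k\xkh{fu}\EE_k\xkh{gv} \dw,
\]
which is symmetric under $(u, f) \leftrightarrow (v, g)$. Hence $f \mapsto T(fu)$ is bounded $L^2(u) \to L^2(v)$ if and only if $g \mapsto T(gv)$ is bounded $L^2(v) \to L^2(u)$, with the same norm. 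It therefore suffices to prove necessity of the first testing condition and sufficiency assuming both.

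\emph{Necessity.} Fix $n$ and $A \in \cF_n$, and take $f = \chi_A$. For $k \geq n$ we have $\EE_k\xkh{\chi_A u} = \chi_A u_k$, so pointwise $T(\chi_A u) \geq \chi_A T_{\geq n} u$. Boundedness then gives
\[
	\int_A v \xkh{T_{\geq n} u}^2 \dw \leq \norm{T}^2 \int_A u \dw .
\]
Since this holds for every $A \in \cF_n$, the definition of conditional expectation yields $\EE_n\xkh{v (\sum_{k \geq n} a_k u_k)^2} \leq \norm{T}^2 u_n$. The second condition follows by the duality symmetry above.

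\emph{Sufficiency.} I would use principal stopping times for both functions. Let $x_k = \EE_{\mu}\xkh{f \mid \cF_k}$ and $y_k = \EE_{\nu}\xkh{g \mid \cF_k}$. Define $\sigma_0 = 0$ and $\sigma_{j+1} = \inf\dakh{k > \sigma_j : x_k > 2x_{\sigma_j}}$, and define $\pi_j$ for $y$ in the same way. For each $k$, let $\sigma(k)$ and $\pi(k)$ be the last $f$- and $g$-stopping times $\leq k$; then $x_k \leq 2x_{\sigma(k)}$ and $y_k \leq 2y_{\pi(k)}$. Split the bilinear sum into the part where $\pi(k) \leq \sigma(k)$ and the part where $\sigma(k) < \pi(k)$; by symmetry only the first needs treatment.

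In the first part, group the indices $k$ by the value of $\sigma(k) = \sigma_j$ and bound $x_k \leq 2x_{\sigma_j}$. The block contribution is then
\[
	2\,\EE\xkh{x_{\sigma_j} \textstyle\sum_{k \in [\sigma_j, \sigma_{j+1})} a_k u_k\, g v}.
\]
Applying Cauchy--Schwarz conditionally on $\cF_{\sigma_j}$ together with the first testing condition at the stopping time $\sigma_j$ should give the bound
\[
	2\,\EE\xkh{x_{\sigma_j} S_u \sqrt{u_{\sigma_j}}\, \EE_{\sigma_j}\xkh{g_j^2 v}^{1/2}},
\]
where $g_j$ is $g$ localized to the block. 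The testing condition at a stopping time follows from the fixed-$n$ version by summing over $\dakh{\sigma_j = n}$. The $f$-side then sums via the sparseness of principal times:
\[
	\sum_j \EE\xkh{x_{\sigma_j}^2 u_{\sigma_j}} \lesssim \norm{f}_{L^2(u)}^2 .
\]
This holds because $x_{\sigma_{j+1}} > 2x_{\sigma_j}$ makes the sum geometrically dominated by $\EE\xkh{(M_{\mu}f)^2 u}$, which Doob's inequality in $L^2(\mu)$ controls.

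\emph{Main obstacle.} The difficulty is the quasi-orthogonality of the $g$-pieces, namely showing $\sum_j \EE\xkh{g_j^2 v} \lesssim \norm{g}^2_{L^2(v)}$. Here $g$ cannot simply be cut along the $f$-blocks. The first thing I would try, following Lacey--Sawyer--Uriarte-Tuero, is to replace $g v$ on each block by its conditional expectation at the $g$-stopping level using $y_k \leq 2y_{\pi(k)}$. This amounts to writing $g$ as a sum of martingale differences of $\nu$-conditional expectations between consecutive $g$-stopping times, which are orthogonal in $L^2(\nu)$. Each $k$ with $\pi(k) \leq \sigma(k)$ would then be charged to the unique $g$-stopping interval containing $k$, and the Carleson-type counting of how many $f$-blocks sit over a given $g$-interval would have to be controlled. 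If this bookkeeping fails, the fallback is a direct argument with the dual testing condition for the complementary term, followed by a good-$\lambda$ style comparison.
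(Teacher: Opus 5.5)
Your necessity argument (testing with $f=\chi_A$, $A\in\cF_n$) is correct; the paper proves only sufficiency. The gap is in sufficiency, at the step you flag yourself: the bound $\sum_j \EE(g_j^2 v)\lesssim \norm{g}_{L^2(v)}^2$ is false for your blocks, not just unproven. Take $g\equiv 1$. Then $y_k\equiv 1$ and there is no $g$-stopping time after $\pi_0=0$. Every $k$ lies in your first part, each block is $[\sigma_j,\sigma_{j+1})$, and the natural localization (conditioning $g$ at the end of the block) is $g_j=\chi_{\{\sigma_j<\infty\}}$. The required estimate then reads $\EE(vN)\lesssim \EE(v)$ with $N=\#\{j:\sigma_j<\infty\}$. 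But $N$ depends only on $f$ and $u$, and the $f$-stopping times are controlled only through $\mu$ (via $x_{\sigma_{j+1}}>2x_{\sigma_j}$ and Doob in $L^2(\mu)$). So this fails whenever $v$ concentrates where $f$ stops often. In general the obstruction is the unbounded number of $f$-blocks inside one $g$-interval $[\pi_i,\pi_{i+1})$, each seeing $y\approx y_{\pi_i}$. Your sketched repair keeps the first testing condition at the $f$-times, so it still has to count these blocks against $\nu$, which is exactly the uncontrolled quantity. Neither fallback is carried out, so sufficiency is not established.

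The missing idea is a way to make the $g$-pieces summable through the $f$-side sparseness.

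\emph{The paper's route (following Treil).} Split the form by $A_n=\{u_n(\EE_n^u f)^2\ge v_n(\EE_n^v g)^2\}$ and its complement. On the $A$-part, build $f$-principal sets that stop only at times $i$ with $\omega\in A_i$. On $E(P)$, disjointness handles $g$. On a child $Q\in\cP(P)$, $\Lambda_A^P$ is $\cF_{\kappa_1(Q)}$-measurable, so $g$ may be replaced by $\EE^v_{\kappa_1(Q)}g$. Then $Q\subset A_{\kappa_1(Q)}$ gives $\int_Q(\EE^v_{\kappa_1(Q)}g)^2\dnu\le\int_Q(\EE^u_{\kappa_1(Q)}f)^2\dmu\le 2^{2\kappa_2(Q)}\mu(Q)$, which is summed by the $f$-sparseness.

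\emph{A repair of your own scheme.} It works if you reverse the pairing. On $\{\pi(k)\le\sigma(k)\}$, bound $y_k\le 2y_{\pi(k)}$ and localize $f$ instead. For $\sigma_j\le k<\sigma_{j+1}$ the weight multiplying $\EE_k(fu)$ is $\cF_k$-measurable and supported in $\{k<\sigma_{j+1}\}$, so $fu$ may be replaced by $x_{\sigma_{j+1}}u$. Then use the second testing condition at the $g$-time $\pi_i$. Each $\sigma_j$ lies in exactly one $g$-interval. Let $B_{ij}$ be the sum of $a_kv_k$ over $[\sigma_j,\sigma_{j+1})\cap[\pi_i,\pi_{i+1})$, so that $\sum_j B_{ij}\le T_{\ge\pi_i}v$. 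Pointwise Cauchy--Schwarz in $j$ then gives the bound
\[
2S_v\Big(\sum_i\EE\big(y_{\pi_i}^2v_{\pi_i}\chi_{\{\pi_i<\infty\}}\big)\Big)^{1/2}\Big(\sum_j\EE\big(u\,x_{\sigma_{j+1}}^2\chi_{\{\sigma_j<\infty\}}\big)\Big)^{1/2}.
\]
Both factors are controlled by geometric growth and Doob's inequality. The rule is to average the older stopping function and localize the more recent one, not the other way round.
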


In this paper, we only use and prove the sufficiency of the conditional Sawyer-type testing conditions. The proof follows Treil \cite[Sections~3.2--3.5]{treil2015positive}, with stopping cubes replaced by principal sets. We use the principal set construction of Tanaka and Terasawa \cite[pp.~942--943]{tanaka-terasawa2013} in the form developed by Chen, Zhu, Zuo and Jiao \cite[Section~3]{chen2020}.

\proofpart Let $A_n \in \cF_n$, $\dmu = u \dw$ and $\EE_{n}^{u}\xkh{f} = \frac{\EE_{n}\xkh{f u}}{u_n}$. The construction of principal sets is as follows. We set
\[
	P_0^m = \dakh{2^{m - 1} < \EE_{0}^{u}\xkh{f} \leq 2^m}, \quad m \in \ZZ,
\]
and assume $\mu\xkh{P_0^m} > 0$. Let $\kappa_1\xkh{P_0^m} = 0$, $\kappa_2\xkh{P_0^m} = m$ and $\cP_0^m = \dakh{P_0^m}$. The zeroth generation of principal sets is $\cP_0^m$.

For $k \geq 0$, suppose we have constructed the $k$-th generation of principal sets $\cP_k^m$. Fixing $P \in \cP_k^m$, we define a stopping time
\[
	\tau_{P} = \infty \chi_{P^c} + \chi_P \inf\dakh{i > \kappa_1(P) : \omega \in A_i \text{ and } \EE_{i}^{u}\xkh{f} > 2^{\kappa_2(P) + 1}}.
\]
For each $i > \kappa_1(P)$, $j \geq \kappa_2(P) + 2$, we define
\[
	Q_{i, j}(P) = P \cap \dakh{\tau_P = i} \cap \dakh{2^{j - 1} < \EE_{i}^{u}\xkh{f} \leq 2^j}.
\]
Let
\[
	\cP(P) = \dakh{Q_{i, j}(P) : i > \kappa_1(P), j \geq \kappa_2(P) + 2, \mu\xkh{Q_{i, j}(P)} > 0}, \quad \cP_{k + 1}^m = \bigcup_{P \in \cP_k^m}{\cP(P)}.
\]
For $Q = Q_{i, j}(P)$, we let $\kappa_1(Q) = i = \tau_P$ and $\kappa_2(Q) = j$. Denote $E(P) = P \cap \dakh{\tau_P = \infty}$. We define the collection of principal sets $\cP$ by
\[
	\cP^m = \bigcup_{k \geq 0}{\cP_k^m}, \quad P_0 = \bigcup_{m \in \ZZ} P_0^m, \quad \cP_k = \bigcup_{m \in \ZZ} \cP_k^m, \quad \cP = \cP^{\ZZ} = \bigcup_{m \in \ZZ} \cP^m.
\]

\proofpart It is easy to see that the collection of principal sets $\cP^m$ satisfies the following properties:

\begin{enumerate}[label=(P\arabic*)]
	\item $P$ is $\cF_{\kappa_1(P)}$-measurable and the sets $Q \in \cP(P)$ are pairwise disjoint. \label{item:Pmeas&QdisUNION}
	\item We have $P_0^m = \bigsqcup_{P \in \cP^m}{E(P)}$ and $\chi_{P} \leq 2 \EE_{\kappa_1(P)}^{u}\xkh{\chi_{E(P)}} \chi_{P}$. \label{item:P0disUNION&2EP}
	\item For each $P \in \cP^m$, $2^{\kappa_2(P) - 1}\leq \EE_{\kappa_1(P)}^{u}\xkh{f} \leq 2^{\kappa_2(P)}$ holds on $P$. \label{item:domain2^ka}
	\item For every $n \geq 0$, we have $\chi_{P \cap \dakh{\kappa_1(P) \leq n < \tau_P}} \chi_{A_n} \EE_{n}^{u}\xkh{f} \leq 2^{\kappa_2(P) + 1} \chi_{P}$. \label{item:2^kaDOMAINinAn}
	\item There is a disjoint union decomposition
	\[
		A_n \cap \dakh{\EE_{0}^u\xkh{f} > 0} = \bigsqcup_{P \in \cP} A_n \cap P \cap \dakh{\kappa_1(P) \leq n < \tau_P}.
	\]
	\label{item:AnDISunion}
\end{enumerate}

The properties \ref{item:Pmeas&QdisUNION}--\ref{item:domain2^ka} are proved in \cite{chen2020}. To prove \ref{item:2^kaDOMAINinAn}, we consider $\omega \in P \cap \dakh{\kappa_1(P) \leq n < \tau_P} \cap A_n$. If $n = \kappa_1(P)$, then we have $\EE_{n}^{u}\xkh{f} \leq 2^{\kappa_2(P)} < 2^{\kappa_2(P) + 1}$ by \ref{item:domain2^ka}. For $n > \kappa_1(P)$, the inequality $\EE_{n}^{u}\xkh{f} > 2^{\kappa_2(P) + 1}$ would imply
\[
	\tau_P \leq n < \tau_P,
\]

a contradiction. Then we turn to proving \ref{item:AnDISunion}. Denote
\[
	R_{P, n} = A_n \cap P \cap \dakh{\kappa_1(P) \leq n < \tau_P}.
\]
Suppose $P, P' \in \cP^m$ and $P' \subsetneq P$. On $R_{P, n} \cap P'$, we have $\kappa_1(P') \geq \tau_{P} > n$, which implies $R_{P, n} \cap R_{P', n} = \varnothing$.

To verify the decomposition, we assume $\omega \in A_n \cap \dakh{\EE_{0}^u\xkh{f} > 0}$. There is a unique $m \in \ZZ$ such that $\omega \in P_0^m$ and $\kappa_1\xkh{P_0^m} = 0 \leq n$. Set $P = P_0^m$. If $n < \tau_{P}(\omega)$, then $\omega \in R_{P, n}$.

Otherwise, we set $i = \tau_{P}(\omega) \leq n$. There exists a principal set $Q_{i, j} \in \cP(P)$ such that $\omega \in Q_{i, j}$ and $\kappa_1(Q_{i, j}) = i > \kappa_1(P)$. Denote $P_0 = P$ and $P_1 = Q_{i, j}$. If $n \geq \tau_{P_1}(\omega)$, then the process continues and we can find a sequence of principal sets
\[
	P_N \subset \cdots \subset P_2 \subset P_1 \subset P_0, \quad n \geq \tau_{P_N}(\omega) > \cdots > \tau_{P_2}(\omega) > \tau_{P_1}(\omega) > \tau_{P_0}(\omega).
\]
This process must terminate after finitely many steps and $\tau_{P_{N + 1}} > n$. Thus we have $\omega \in R_{P_{N + 1}, n}$, which proves the decomposition property \ref{item:AnDISunion}.

\begin{proof}[Proof of Lemma~\ref{lem:sawyer}]
	\proofpart We will prove
	\[
		\sum_{n \in \NN} \int_{\Omega} a_n \EE_{n}\xkh{f u} \EE_{n}\xkh{g v} \dw \leq C (S_u + S_v) \norm{f}_{L^2(u)} \norm{g}_{L^2(v)},
	\]
	for some constant $C$ and for all $f \in L^2(u)$, $g \in L^2(v)$. This is equivalent to the boundedness of the operator $f \mapsto T(f u)$ from $L^2(u)$ to $L^2(v)$.
	
	Without loss of generality, we assume $f$ and $g$ are nonnegative and
	\[
		\norm{f}_{L^2(u)} = \norm{g}_{L^2(v)} = 1.
	\]
	Denote $\dmu = u \dw$, $\dnu = v \dw$ and
	\[
		\EE_{n}^{u}\xkh{f} = \frac{\EE_{n}\xkh{f u}}{u_n}, \quad \EE_{n}^{v}\xkh{g} = \frac{\EE_{n}\xkh{g v}}{v_n}.
	\]
	
	\proofpart Let
	\[
		\Lambda(f, g) = \sum_{n \in \NN} \int_{\Omega} a_n \EE_{n}\xkh{f u} \EE_{n}\xkh{g v} \dw,
	\]
	and
	\[
		A_n = \dakh{u_n \EE_{n}^{u}\xkh{f}^2 \geq v_n \EE_{n}^{v}\xkh{g}^2}, \quad B_n = \dakh{u_n \EE_{n}^{u}\xkh{f}^2 < v_n \EE_{n}^{v}\xkh{g}^2}.
	\]
	
	We consider the decomposition $\displaystyle \Lambda = \Lambda_A + \Lambda_B := \sum_{n \in \NN} \int_{A_n} \dots \dw + \sum_{n \in \NN} \int_{B_n} \dots \dw$ and want to prove $\Lambda_A \leq C S_u$. In the remainder of the proof, we consider only $\EE_{n}^{u}\xkh{f} > 0$.
	
	Using the above construction of principal sets, we define
	\[
		\Lambda_A^P = \sum_{n \in \NN} a_n \chi_{R_{P, n}} \EE_{n}\xkh{f u}.
	\]
	Then it is easy to see that
	\[
		\Lambda_A = \sum_{P \in \cP} \int_{P} \Lambda_A^P g \dnu.
	\]
	
	\proofpart To estimate $\Lambda_A^P$, \ref{item:2^kaDOMAINinAn} gives us
	\[
		\Lambda_A^P = \sum_{n \in \NN} a_n \chi_{R_{P, n}} \EE_{n}^{u}\xkh{f} u_n \leq \chi_{P} 2^{\kappa_2(P) + 1} \sum_{n \geq \kappa_1(P)} a_n u_n.
	\]
	Since $P \in \cF_{\kappa_1(P)}$, the conditional Sawyer-type testing condition gives us
	\begin{align*}
		\norm{\Lambda_A^P}_{L^2(v)}^{2} & \leq 2^{2(\kappa_2(P) + 1)} \int_P v \xkh{\sum_{n \geq \kappa_1(P)} a_n u_n}^2 \dw \\
		& = 2^{2(\kappa_2(P) + 1)} \int_P \EE_{\kappa_1(P)}\xkh{v \xkh{\sum_{n \geq \kappa_1(P)} a_n u_n}^2} \dw \\
		& \leq 2^{2(\kappa_2(P) + 1)} S_u^2 u(P).
	\end{align*}
	
	By Doob's maximal inequality, we have
	\[
		\int_{\Omega} \xkh{\sup_{n \in \NN}\abs{\EE_{n}^{u}\xkh{f}}}^2 \dmu \leq 4 \int_{\Omega} \abs{f}^2 \dmu.
	\]
	By \ref{item:P0disUNION&2EP} and \ref{item:domain2^ka}, we obtain
	\begin{align*}
		\sum_{P \in \cP} 2^{2 \kappa_{2}(P)} \chi_{P} & \leq \sum_{P \in \cP} \xkh{2 \EE_{\kappa_1(P)}^{u}\xkh{f}}^2 \xkh{2 \EE_{\kappa_1(P)}^{u} \xkh{\chi_{E(P)} \chi_{P}}} \\
		\sum_{P \in \cP} 2^{2 \kappa_{2}(P)} \mu(P) & \leq 8 \sum_{P \in \cP} \int_{E(P)} \EE_{\kappa_1(P)}^{u}\xkh{f}^2 \dmu \\
		& \leq 8 \int_{\Omega} \sup_{n \in \NN}\abs{\EE_{n}^{u}\xkh{f}}^2 \dmu \\
		& \leq 32 \int_{\Omega} \abs{f}^2 \dmu = 32.
	\end{align*}
	Thus we have
	\[
		\sum_{P \in \cP} \norm{\Lambda_A^P}_{L^2(v)}^{2} \leq 128 S_u^2.
	\]
	
	\proofpart Consider the disjoint decomposition
	\[
	P = E(P) \cup \bigsqcup_{Q \in \cP(P)} Q.
	\]
	On $E(P)$, the Cauchy-Schwarz inequality gives
	\[
		\sum_{P \in \cP} \int_{E(P)} \Lambda_A^P g \dnu \leq \xkh{\sum_{P \in \cP} \norm{\Lambda_A^P}_{L^2(v)}^2}^{\frac{1}{2}} \xkh{\sum_{P \in \cP} \int_{E(P)} g^2 \dnu}^{\frac{1}{2}} \leq 8 \sqrt{2} S_u.
	\]
	
	In the other case, since $Q \subset A_{\kappa_1(Q)}$, we can estimate the integral
	\begin{align*}
		\int_{Q} \xkh{\EE_{\kappa_1(Q)}^{v}\xkh{g}}^2 \dnu & = \int_{Q} \xkh{\EE_{\kappa_1(Q)}^{v}\xkh{g}}^2 v_{\kappa_1(Q)} \dw \\
		& \leq \int_Q \xkh{\EE_{\kappa_1(Q)}^{u}\xkh{f}}^2 u_{\kappa_1(Q)} \dw \\
		& = \int_Q \xkh{\EE_{\kappa_1(Q)}^{u}\xkh{f}}^2 \dmu \leq 2^{2 \kappa_2(Q)} \mu(Q),
	\end{align*}
	and
	\begin{align*}
		\sum_{P \in \cP} \sum_{Q \in \cP(P)} \int_Q \Lambda_A^P g \dnu & = \sum_{P \in \cP} \sum_{Q \in \cP(P)} \int_Q \Lambda_A^P \EE_{\kappa_1(Q)}^{v}\xkh{g} \dnu \\
		& \leq \xkh{\sum_{P \in \cP} \sum_{Q \in \cP(P)} \int_Q \xkh{\Lambda_A^P}^2 \dnu}^{\frac{1}{2}} \xkh{\sum_{P \in \cP} \sum_{Q \in \cP(P)} \int_Q \xkh{\EE_{\kappa_1(Q)}^{v}\xkh{g}}^2 \dnu} ^{\frac{1}{2}} \\
		& \leq \xkh{\sum_{P \in \cP} \norm{\Lambda_A^P}_{L^2(v)}^2}^{\frac{1}{2}} \xkh{\sum_{Q \in \cP} 2^{2 \kappa_2(Q)} \mu(Q)}^{\frac{1}{2}} \leq 64 S_u.
	\end{align*}
	
	All in all, we have $\Lambda_A \leq \xkh{64 + 8\sqrt{2}} S_u = C S_u$. By symmetry, we can also prove $\Lambda_B \leq C S_v$. This proves the lemma.
\end{proof}

By Lemma~\ref{lem:sawyer}, to prove Theorem~\ref{thm:1side}, it is enough to prove the testing estimate for $u$, since the other estimate follows by interchanging $u$ and $v$. Take $\psi$ as in \eqref{def:psi} and replace $v$ by $\frac{\alpha(1)}{A_u}v$, keeping the notation $v$. The separated bump condition and the lower bound for $\psi$ give
\[
	u_n v_n \leq \frac{\alpha(1)}{\rho_n\alpha(\rho_n)} \leq \psi(\rho_n).
\]
Thus, it remains to prove the following theorem. 

\begin{thm}
	\label{thm:testing}
	Let $a = \xkh{a_n}_{n \in \NN}$ be a Carleson sequence and let $T$ be defined by \eqref{def:CARoper}. Suppose the weights $u$, $v$ satisfy
	\begin{equation}
		\label{eq:wCON}
		u_n v_n \leq \psi\xkh{\rho_n},
	\end{equation}
	where $\psi$ is given by \eqref{def:psi}. Then, for every $\ell \in \NN$,
	\[
		\EE_{\ell}\xkh{v \xkh{\sum_{k \geq \ell} a_k u_k}^2} \leq C C_{\psi} \norm{a}_{\car}^2 u_\ell,
	\]
	where $C$ is an absolute constant.
\end{thm}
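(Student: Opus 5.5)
Write $F_\ell=\sum_{k\ge \ell}a_k u_k$ and normalize $\norm{a}_{\car}=1$. The plan is to expand the square and reduce to a sum of single terms, each controlled by one of the estimates already available. The expansion is
\[
	F_\ell^2\le 2\sum_{k\ge\ell}a_k u_k\sum_{j\ge k}a_j u_j .
\]
Since $a_k u_k$ is $\cF_k$-measurable, conditioning on $\cF_k$ gives
\[
	\EE_\ell\xkh{vF_\ell^2}\le 2\,\EE_\ell\xkh{\sum_{k\ge\ell}a_k u_k\,\EE_k\xkh{vF_k}}.
\]
Thus the problem is reduced to controlling the "linear" testing quantity $\EE_k(vF_k)$. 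By Lemma~\ref{lem:carPleqMAX} applied with $v$, this is at most $v_k^{M}$-type objects. Instead, I would write
\[
	\EE_k(vF_k)=\EE_k\xkh{\sum_{j\ge k}a_j u_j v_j}\le \EE_k\xkh{\sum_{j\ge k}a_j u_j^2\,\frac{\psi(\rho_j)}{u_j}},
\]
using \eqref{eq:wCON}. The naive bound $u_j\le u_k\rho_k$ would lose a factor, so a stopping argument is needed.

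The stopping argument uses principal sequences adapted to $\rho$ and $u$. Starting at time $\sigma_0=\ell$, define $\sigma_{i+1}$ as the first time after $\sigma_i$ at which either $u_n>2u_{\sigma_i}$ (a jump of the weight average, handled by the principal set construction as in Lemma~\ref{lem:sawyer}) or $\psi(\rho_n)>2\psi(\rho_{\sigma_i})$ (an entropy jump). On each interval $[\sigma_i,\sigma_{i+1})$ we have $u_j\le 2u_{\sigma_i}$ and $\psi(\rho_j)\le 2\psi(\rho_{\sigma_i})$. Hence $u_j v_j\le\psi(\rho_j)$ and \eqref{eq:emb-simple} with $\varphi=\psi$ give that the contribution of the interval to $\EE_{\sigma_i}(vF_{\sigma_i})$ is bounded by a multiple of $C_\psi$ times a quantity comparable to $1$. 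More precisely, the good-$\lambda$ form I aim for is
\[
	\EE_{\sigma_i}\Big(v\Big(\sum_{\sigma_i\le k<\sigma_{i+1}}a_ku_k\Big)^2\Big)\lesssim u_{\sigma_i}^2 v_{\sigma_i}\cdot(\text{Carleson})\lesssim u_{\sigma_i}\psi(\rho_{\sigma_i}).
\]
Here the Carleson bound $\EE_{\sigma_i}(\sum a_k)\le 1$ and the constancy of the averages on the block are used. This still needs care, since $v$ is not frozen on the block; I would handle that by bounding $v$ through $v_{\sigma_{i+1}\wedge\cdot}$ via a second application of Lemma~\ref{lem:carPleqMAX}.

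The blocks are then summed. The cross terms between blocks are handled by Cauchy--Schwarz together with the geometric growth of $u_{\sigma_i}$ at $u$-jumps, where Doob-type sparseness of the principal sets supplies summability, exactly as in the proof of Lemma~\ref{lem:sawyer}. The diagonal terms $\sum_i\EE_\ell(u_{\sigma_i}\psi(\rho_{\sigma_i}))$ are split according to the reason for stopping. At $u$-jump stops, the principal-set measure estimate (property \ref{item:P0disUNION&2EP}) gives $\sum u_{\sigma_i}\lesssim u_\ell$ after weighting, combined with \eqref{eq:emb-simple}. At entropy-jump stops, \eqref{eq:stopped-jump} of Corollary~\ref{cor:stopped-defect} gives
\[
	\EE_{\sigma_i}\xkh{u_{\sigma_{i+1}}\rho_{\sigma_{i+1}}\psi(\rho_{\sigma_{i+1}})}\le 48\,\EE_{\sigma_i}\Big(\sum_{\sigma_i\le n<\sigma_{i+1}}u_ne_n\Big),
\]
and since $\rho\ge1$, summing over $i$ and applying \eqref{eq:defect-total} yields a total of at most $48(C_\psi-1)u_\ell$.

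The main obstacle is the block estimate. The difficulty is controlling $\EE(vF^2)$ within a block using only $u_nv_n\le\psi(\rho_n)$, with the first power of $\psi$ and no square, while $v$ is not a martingale average frozen at the stopping time. My first attempt would replace $v$ inside the block by $v_k$ at each step, via the conditioning identity above. That reduces the block to $\sum_k a_k u_k\,\EE_k(\sum_{j\ge k}a_ju_jv_j)$. Bounding the inner sum by $2u_{\sigma_i}\cdot2\psi(\rho_{\sigma_i})$ using the Carleson norm then leaves $\EE_{\sigma_i}(\sum_k a_k u_k)\psi(\rho_{\sigma_i})\le u^M_{\sigma_i}\psi(\rho_{\sigma_i})=u_{\sigma_i}\rho_{\sigma_i}\psi(\rho_{\sigma_i})$. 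This is precisely the quantity the entropy-defect bound \eqref{eq:stopped-jump} together with \eqref{eq:defect-total} is designed to sum.
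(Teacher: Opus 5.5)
\textbf{Verdict: there is a genuine gap.} Your within-block estimate is fine once a spurious factor of $u$ is removed. Hypothesis \eqref{eq:wCON} gives $\EE_k(vF_k)\le\EE_k\bigl(\sum_{j\ge k}a_j\psi(\rho_j)\bigr)$, not $\EE_k\bigl(\sum_{j\ge k}a_ju_j\psi(\rho_j)\bigr)$. Inside a block this bounds the conditional $v$-integral of the squared block sum by $\lesssim u_{\sigma_i}\psi(\rho_{\sigma_i})$. The gap is in everything that couples different times.

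Globally, $\EE_k(vF_k)\le\norm{a}_{\car}$ (as $\psi\le 1$), which only returns the trivial bound $2\norm{a}_{\car}^2u_\ell^M$. Moreover, the quantity $Q=\EE_\ell\bigl(\sum_{k\ge\ell}a_ku_k\sum_{j\ge k}a_j\psi(\rho_j)\bigr)$ produced by your reduction is not $\lesssim u_\ell$. To see this, take $[0,1)$ with the $4$-adic filtration and set $I_k=[0,4^{-k})$, $J_{k+1}=[4^{-k-1},2\cdot 4^{-k-1})$, $u=4^N\chi_{I_N}$, $\ell=0$, $a_0=1$, and $a_n=\chi_{I_n}+\chi_{J_n}$ for $1\le n\le N$. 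Then $a$ is even a principal sequence with $\norm{a}_{\car}\le 5/3$. Also $u_0=1$, $u_k=4^k$ on $I_k$, and $\rho_{k+1}=1$ on $J_{k+1}$ (this survives adding a small constant to $u$). Hence
\[
Q\ \ge\ \sum_{k=0}^{N-1}\int_{J_{k+1}}a_ku_k\,a_{k+1}\psi(\rho_{k+1})\dw=\sum_{k=0}^{N-1}4^k\,4^{-k-1}=\frac{N}{4}.
\]
What is lost is that $v$ on $J_{k+1}$ is controlled by \eqref{eq:wCON} at the earlier time $k$, where $u$ is large: $\int_{J_{k+1}}v\dw\le\abs{I_k}\,\psi(\rho_k)/u_k$. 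It is not controlled at time $k+1$. Bounding $\psi(\rho_j)$ by $2\psi(\rho_{\sigma_{i'}})$ on later blocks only enlarges $Q$.

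The mechanisms you announce for the remaining steps do not work either.
\begin{itemize}
\item \textbf{Cross terms.} The Cauchy--Schwarz/Doob step of Lemma~\ref{lem:sawyer} uses the testing constant $S_u$, i.e.\ the very bound being proved. There is also no geometric growth of $u_{\sigma_i}$ to sum against. An entropy stop only forces $\rho_{\sigma_{i+1}}<2^{-1/2}\rho_{\sigma_i}$, and $u_{\sigma_i}$ may stay flat along up to about $2\log_2\rho_{\sigma_i}$ consecutive entropy stops.
\item \textbf{Diagonal sum at $u$-stops.} Property (P2) does not give $\sum_iu_{\sigma_i}\lesssim u_\ell$ over $u$-stops. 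For doubling stops this sum is comparable to $u_\ell^M$; it is $N+1$ in the example above. You need the factor $\psi(\rho_{\sigma_i})$ and \eqref{eq:emb-simple}, hence a Carleson bound for the $u$-stops. That bound is not automatic, because the $u$-stops are measured from preceding entropy stops, and these are not sparse.
\item \textbf{Your last paragraph.} There the factor is $\rho_{\sigma_i}\psi(\rho_{\sigma_i})$ at every stop, while \eqref{eq:stopped-jump} controls it only at entropy-jump stops.
\end{itemize}

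The missing idea is the paper's good-$\lambda$ mechanism.
\begin{enumerate}
\item Dominate $\sum_na_nu_n\le 16\norm{a}_{\car}\sum_nb_nu_n$ pointwise by a principal sum, stopping also when the partial sum of $a$ exceeds $4\norm{a}_{\car}$.
\item On the window where the partial sums of $b_nu_n$ lie in $(\lambda,4\lambda]$, cut at the entropy jumps $\gamma_m$.
\item Apply to each piece the weak-$L^2$ bound of Lemma~\ref{lem:principal-weak}, with thresholds $t_m\approx\lambda\rho_{\gamma_m}^{-1/2}$. This lemma uses \eqref{eq:wCON} at the times where $u_n$ is large, via Lemma~\ref{lem:principal-counting}. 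The thresholds sum to at most $\lambda$ because $\rho_{\gamma_m}$ decays geometrically.
\item The cost $\lambda^2/t_m^2\approx\rho_{\gamma_m}$ is exactly what \eqref{eq:stopped-jump} absorbs. Integrating in $\dd{\lambda}/\lambda$ then costs only a bounded factor per index.
\end{enumerate}
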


\section{Principal Sequence}
\label{sec:principal}

Fix $\ell \in \NN$. Let $(\tau_j)_{j \geq 0}$ be a sequence of stopping times taking values in $\{\ell, \ell + 1, \ldots\} \cup \{\infty\}$, such that $\tau_j < \tau_{j + 1}$ on $\dakh{\tau_j < \infty}$ and $\tau_{j + 1} = \infty$ on $\dakh{\tau_j = \infty}$. Suppose that
\[
	\EE_{\tau_j}\xkh{\chi_{\dakh{\tau_{j + 1} < \infty}}} \leq \frac{1}{2}
\]
on $\dakh{\tau_j < \infty}$. We define the principal sequence $b = \xkh{b_n}_{n \in \NN}$ by
\begin{equation}
	\label{def:principal-sequence}
	b_n = \sum_{j \geq 0} \chi_{\dakh{\tau_j = n}}, \quad n \geq \ell,
\end{equation}
and let $b_n = 0$ for $n < \ell$. The assumption on the stopping times can equivalently be written as
\[
	b_n \EE_n\xkh{\chi_{\dakh{\exists m > n : b_m = 1}}} \leq \frac{1}{2} b_n, \quad n \in \NN.
\]

Denote
\[
	E_n = \dakh{b_n = 1, b_k = 0 \text{ for all } k > n}.
\]
In applications, principal sequences and principal sets can be converted into each other. For a collection of principal sets $\cP$, we have
\[
	b_n = \sum_{\substack{P \in \cP \\ \kappa_1(P) = n}} \chi_P, \quad E_n = \bigsqcup_{\substack{P \in \cP \\ \kappa_1(P) = n}} E(P).
\]
Conversely, the sets $\dakh{\tau_0 = n_0, \dots, \tau_j = n_j}$ give the corresponding principal sets. It is easy to see that the principal sequence satisfies the following properties:

\begin{enumerate}[label=(B\arabic*)]
	\item $b_n$ is $\cF_n$-measurable and $b_n \in \dakh{0, 1}$. \label{item:bn-meas}
	\item We have $\dakh{\sum_{n \geq \ell} b_n > 0} = \bigsqcup_{n \geq \ell} E_n$ almost everywhere, and $b_n \leq 2 \EE_n\xkh{\chi_{E_n}}$. 
	\label{item:bn-sparse}
	\item $b$ is a Carleson sequence with $\norm{b}_{\car} \leq 2$. \label{item:bn-car}
\end{enumerate}

Property \ref{item:bn-meas} follows from the stopping time property. Since
\[
	E_n = \bigsqcup_{j \geq 0} \dakh{\tau_j = n, \tau_{j + 1} = \infty}, \qquad \dakh{\tau_j = n} \in \cF_n,
\]
we have
\begin{align*}
	\EE_n\xkh{\chi_{E_n}}
	& = \sum_{j \geq 0} \chi_{\dakh{\tau_j = n}} \xkh{1 - \EE_n\xkh{\chi_{\dakh{\tau_{j + 1} < \infty}}}} \\
	& \geq \frac{1}{2} \sum_{j \geq 0} \chi_{\dakh{\tau_j = n}}
	= \frac{1}{2} b_n.
\end{align*}

The sets $E_n$ are pairwise disjoint, so
\begin{align*}
	\EE_n\xkh{\sum_{k \geq n} b_k}
	& \leq 2 \sum_{k \geq n} \EE_n\xkh{\EE_k\xkh{\chi_{E_k}}} \\
	& = 2 \EE_n\xkh{\sum_{k \geq n} \chi_{E_k}}
	\leq 2,
\end{align*}
which proves \ref{item:bn-car}. In particular,
\[
	\sum_{n \geq \ell} b_n < \infty, \qquad \dakh{\sum_{n \geq \ell} b_n > 0} = \bigsqcup_{n \geq \ell} E_n
\]
almost everywhere, proving \ref{item:bn-sparse}.

In the special case where $b_{\ell} = 1$ almost everywhere, the set on the left-hand side is $\Omega$ up to a null set, and hence
\[
	\Omega = \bigsqcup_{n \geq \ell} E_n
\]
almost everywhere. 

If $c = \xkh{c_n}_{n \in \NN}$ is an adapted sequence with $c_n \in \dakh{0, 1}$ and $c_n \leq b_n$, then
\begin{align*}
	c_n \EE_n\xkh{\chi_{\dakh{\exists j > n : c_j = 1}}} & \leq c_n \EE_n\xkh{\chi_{\dakh{\exists j > n : b_j = 1}}} \\
	& \leq \frac{1}{2} c_n.
\end{align*}
Thus, $c$ is also a principal sequence.

\begin{cor}
	\label{cor:principal-embedding}
	Let $b$ be the principal sequence defined above. Then
	\[
		\EE_{\ell}\xkh{\sum_{n \geq \ell} b_n u_n \psi(\rho_n)} \leq 8 C_{\psi} u_{\ell}.
	\]
\end{cor}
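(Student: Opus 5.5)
This follows directly from the simple embedding estimate \eqref{eq:emb-simple}, applied with the Carleson sequence $b$ and with $\varphi = \psi$. First we check that $\psi$ is admissible in Theorem~\ref{spEMB}. By construction $\psi$ is nonnegative and non-increasing on $[1,\infty)$, since $-\psi'\geq 0$. Moreover $\psi(1) = 1$, so
\[
	C_{\varphi} = \psi(1) + \int_1^{\infty}\psi(t)\dt = 1 + \int_1^\infty \psi(t)\dt = C_{\psi},
\]
which is finite by the computation $C_\psi = 2\alpha(1)C_\alpha$.

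Second, by property \ref{item:bn-car} the principal sequence $b$ is a Carleson sequence with $\norm{b}_{\car}\leq 2$. Its entries $b_n$ are nonnegative and $\cF_n$-measurable, and they are integrable since $b_n \in \dakh{0,1}$ by \ref{item:bn-meas}. Thus $b$ satisfies the hypotheses on $a$ in Theorem~\ref{spEMB}. The quantities $u_n$ and $\rho_n$ entering \eqref{eq:emb} depend only on $u$ and the filtration, not on the Carleson sequence. Hence \eqref{eq:emb-simple}, the case $f = 1$ of \eqref{eq:emb}, applied with $a = b$ and $\varphi = \psi$, gives for the fixed $\ell$
\[
	\EE_{\ell}\xkh{\sum_{n\geq \ell} b_n u_n\psi(\rho_n)} \leq 4C_\psi\norm{b}_{\car} u_\ell \leq 8 C_\psi u_\ell .
\]

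The only point needing care is that $f = 1$ is admissible, i.e.\ $1 \in L^2(u)$. This holds when $u \in L^1$, which is implicit since $u_n$ is defined as a conditional expectation. If one prefers not to rely on this, one can instead run the Bellman argument of Theorem~\ref{spEMB} directly with $x_n \equiv 1$; there the bound $B(1,y_n)\leq 4C_\varphi$ makes the telescoping legitimate with no integrability issue. Since $b_n = 0$ for $n<\ell$, summing from $n \geq \ell$ loses nothing, and no further obstacle is expected.
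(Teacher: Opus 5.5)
Your proposal is correct and follows the paper's intended argument: apply \eqref{eq:emb-simple} with $a = b$ and $\varphi = \psi$, note that $C_\varphi = C_\psi$ because $\psi(1) = 1$, and use $\norm{b}_{\car} \leq 2$ from \ref{item:bn-car} to get $4C_\psi \cdot 2 = 8C_\psi$. Your remark that $f = 1$ is admissible is harmless and is covered by the standing assumption $u \in L^1$.
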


\subsection{Principal sequence estimation}

\begin{lem}
	\label{lem:principal-counting}
	Let $b$ be a principal sequence and let $\tau \geq \ell$ be a stopping time. Denote
	\[
		N = \sum_{n \geq \tau} b_n.
	\]
	Then, for every integer $k \geq 1$,
	\[
		\EE_{\tau}\xkh{\chi_{\dakh{N \geq k}}} \leq \frac{1}{2^{k - 1}} \EE_{\tau}\xkh{\chi_{\dakh{N \geq 1}}}
	\]
	on $\dakh{\tau < \infty}$.
\end{lem}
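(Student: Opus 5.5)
Introduce the successive hitting times of $b$ after $\tau$:
\[
	\sigma_1 = \inf\dakh{n \geq \tau : b_n = 1}, \qquad \sigma_{i+1} = \inf\dakh{n > \sigma_i : b_n = 1},
\]
with $\inf \varnothing = \infty$. These are stopping times, and $\dakh{N \geq k} = \dakh{\sigma_k < \infty}$. Since $b_n = 1$ exactly when $n$ equals some $\tau_j$, each finite $\sigma_i$ is one of the $\tau_j$, and $\sigma_{i+1}$ is then $\tau_{j+1}$. The principal-sequence assumption, in the form
\[
	b_n \EE_n\xkh{\chi_{\dakh{\exists m > n : b_m = 1}}} \leq \tfrac12 b_n,
\]
gives on $\dakh{\sigma_i = n}$ (where $b_n = 1$) the bound $\EE_n\xkh{\chi_{\dakh{\sigma_{i+1} < \infty}}} \leq \frac12$. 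Here we use that $\dakh{\sigma_{i+1}<\infty} = \dakh{\exists m > n : b_m = 1}$ on $\dakh{\sigma_i = n}$.

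Summing over $n$ yields the stopped inequality
\[
	\EE_{\sigma_i}\xkh{\chi_{\dakh{\sigma_{i+1}<\infty}}} \leq \tfrac12 \chi_{\dakh{\sigma_i<\infty}}.
\]
To justify this, multiply by $\chi_{\dakh{\sigma_i = n}} \in \cF_n$ and use the defining property of $\EE_{\sigma_i}$ on $\cF_{\sigma_i}$-sets, namely $\EE_{\sigma_i} X = \sum_n \chi_{\dakh{\sigma_i=n}}\EE_n X$.

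Next use the tower property. Since $\tau \leq \sigma_1 \leq \sigma_i$, we have $\cF_\tau \subset \cF_{\sigma_i}$, and $\dakh{\sigma_{i+1}<\infty} \subset \dakh{\sigma_i<\infty}$. Therefore
\[
	\EE_\tau\xkh{\chi_{\dakh{\sigma_{i+1}<\infty}}} = \EE_\tau\xkh{\EE_{\sigma_i}\xkh{\chi_{\dakh{\sigma_{i+1}<\infty}}}} \leq \tfrac12 \EE_\tau\xkh{\chi_{\dakh{\sigma_i<\infty}}}.
\]
Iterating from $i=1$ to $k-1$ gives
\[
	\EE_\tau\xkh{\chi_{\dakh{N\geq k}}} \leq 2^{-(k-1)}\EE_\tau\xkh{\chi_{\dakh{N\geq 1}}}
\]
on $\dakh{\tau<\infty}$. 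For $k=1$ the statement is trivial.

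The main technical point is checking the tower and conditional-expectation identities for stopping times in a general filtration without atomicity. Two facts are needed. First, $\cF_\tau\subset\cF_{\sigma_i}$, which holds because $\tau\leq\sigma_i$. Second, $\EE_{\sigma}$ acts as $\EE_n$ on $\dakh{\sigma=n}$. I would verify the latter directly from the definition of conditional expectation by testing against sets $A\cap\dakh{\sigma=n}$ with $A\in\cF_\sigma$. Everything is nonnegative and bounded by $1$, so no integrability issues arise. The behaviour on $\dakh{\sigma_i=\infty}$ is harmless because both sides vanish there.
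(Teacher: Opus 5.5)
Your proof is correct and is essentially the paper's argument. The paper writes $\chi_{\{N \geq k\}} = \sum_{n \geq \tau} b_n \chi_{\{S_n = k-1\}} \chi_{\{\exists j > n : b_j = 1\}}$ with $S_n = \sum_{\tau \leq j \leq n} b_j$, where $b_n \chi_{\{S_n = k-1\}}$ (for $n \geq \tau$) is exactly your $\chi_{\{\sigma_{k-1} = n\}}$; it then conditions each term on $\cF_n$ inside $\EE_{\tau}$ before applying the principal-sequence property and iterates in $k$, which amounts to your identity $\EE_{\sigma_{k-1}} = \sum_n \chi_{\{\sigma_{k-1} = n\}} \EE_n$ combined with the tower property, so the difference is only that you name the hitting times explicitly.
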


\begin{proof}
	Let
	\[
		S_n = \sum_{\tau \leq j \leq n} b_j.
	\]
	The case $k = 1$ is immediate. For every $k \geq 2$, we have
	\begin{align*}
		\chi_{\dakh{N \geq k}} & = \sum_{n \geq \tau} b_n \chi_{\dakh{S_n = k}}, \\
		& = \sum_{n \geq \tau} b_n \chi_{\dakh{S_n = k - 1}} \chi_{\dakh{\exists j > n : b_j = 1}}.
	\end{align*}
	Thus,
	\begin{align*}
		\EE_{\tau}\xkh{\chi_{\dakh{N \geq k}}} & = \EE_{\tau}\xkh{\sum_{n \geq \tau} b_n \chi_{\dakh{S_n = k - 1}} \chi_{\dakh{\exists j > n : b_j = 1}}} \\
		& = \EE_{\tau}\xkh{\sum_{n \geq \tau} b_n \chi_{\dakh{S_n = k - 1}} \EE_n\xkh{\chi_{\dakh{\exists j > n : b_j = 1}}}} \\
		& \leq \frac{1}{2} \EE_{\tau}\xkh{\sum_{n \geq \tau} b_n \chi_{\dakh{S_n = k - 1}}} \\
		& = \frac{1}{2} \EE_{\tau}\xkh{\chi_{\dakh{N \geq k - 1}}} \leq \frac{1}{2^{k - 1}} \EE_{\tau}\xkh{\chi_{\dakh{N \geq 1}}}. \qedhere
	\end{align*}
\end{proof}

The weighted form is as follows.

\begin{cor}
	\label{cor:principal-weighted-counting}
	Under the assumptions of Lemma~\ref{lem:principal-counting}, let $u, v$ be weights and let $a, d$ be $\cF_{\tau}$-measurable functions with $a > 0$, $d \geq 0$. Suppose that, for every $n \geq \ell$,
	\[
		u_n \geq a, \quad v_n \leq d \quad \text{on } \dakh{\tau \leq n, b_n = 1}.
	\]
	Then, for every integer $k \geq 1$,
	\[
		\EE_{\tau}\xkh{v \chi_{\dakh{N \geq k}}} \leq \frac{1}{2^{k - 1}} \frac{d}{a} u_{\tau}
	\]
	on $\dakh{\tau < \infty}$.
\end{cor}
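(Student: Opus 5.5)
The plan is to reduce the weighted statement to the unweighted counting bound of Lemma~\ref{lem:principal-counting}, applied not at the stopping time $\tau$ but at the first principal time after $\tau$, where the hypotheses on $u_n$ and $v_n$ become available. Let
\[
	\sigma = \inf\dakh{n \geq \tau : b_n = 1},
\]
which is a stopping time with $\sigma \geq \tau$. We have $\dakh{N \geq 1} = \dakh{\sigma < \infty}$, and the hypotheses give $u_\sigma \geq a$ and $v_\sigma \leq d$ on $\dakh{\sigma < \infty}$.

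\textbf{Step 1.} Since $\dakh{N \geq k} \subset \dakh{\sigma<\infty}$ and this set is $\cF_\sigma$-measurable, the tower property gives
\[
	\EE_\tau\xkh{v \chi_{\dakh{N \geq k}}} = \EE_\tau\xkh{\chi_{\dakh{\sigma<\infty}}\EE_\sigma\xkh{v\chi_{\dakh{N\geq k}}}}.
\]

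\textbf{Step 2.} Bound the inner term $\EE_\sigma(v\chi_{\{N\ge k\}})$ by $2^{-(k-1)}\,d$ on $\dakh{\sigma<\infty}$. On $\dakh{\sigma<\infty}$ we have $N = \sum_{n\ge\sigma} b_n$ with $b_\sigma = 1$. Lemma~\ref{lem:principal-counting} (or directly its proof) shows that each further principal time costs a factor $\tfrac12$ in unweighted measure, but here we need a $v$-weighted bound. So I will redo the induction of Lemma~\ref{lem:principal-counting} with $v$ inserted. For $k\ge2$,
\[
	\chi_{\{N\ge k\}}=\sum_{n\ge\sigma} b_n\chi_{\{S_n=k-1\}}\chi_{\{\exists j>n:b_j=1\}}.
\]
Conditioning at $n$ requires controlling $\EE_n(v\chi_{\{\exists j>n:b_j=1\}})$, which is not obviously $\le \tfrac12 v_n$. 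This is the main obstacle. The way around it is to peel from the top instead. Let $\sigma_k$ be the $k$-th principal time after $\tau$, so that $\dakh{N\ge k}=\dakh{\sigma_k<\infty}$. Then
\[
	\EE_\tau(v\chi_{\{\sigma_k<\infty\}})=\EE_\tau(v_{\sigma_k}\chi_{\{\sigma_k<\infty\}})\le d\,\EE_\tau(\chi_{\{N\ge k\}}),
\]
using the optional sampling identity $\EE_\tau(v\chi_F)=\EE_\tau(v_{\sigma_k}\chi_F)$ for $F\in\cF_{\sigma_k}$, together with $v_{\sigma_k}\le d$ there, since $d$ is $\cF_\tau$-measurable and $b_{\sigma_k}=1$.

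\textbf{Step 3.} By Lemma~\ref{lem:principal-counting},
\[
	\EE_\tau(\chi_{\{N\ge k\}})\le 2^{-(k-1)}\EE_\tau(\chi_{\{\sigma<\infty\}}).
\]
It then remains to show $\EE_\tau(\chi_{\{\sigma<\infty\}})\le u_\tau/a$. The same optional sampling identity gives
\[
	u_\tau\ge\EE_\tau(u\chi_{\{\sigma<\infty\}})=\EE_\tau(u_\sigma\chi_{\{\sigma<\infty\}})\ge a\,\EE_\tau(\chi_{\{\sigma<\infty\}}),
\]
since $a$ is $\cF_\tau$-measurable and positive. Combining the three steps yields the claim.

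The only delicate points are the optional sampling identities for possibly infinite stopping times, which hold by decomposing over $\dakh{\sigma_k=m}\in\cF_m$ and summing, and ensuring that $a$ and $d$ can be pulled out of $\EE_\tau$.
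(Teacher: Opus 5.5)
Your final argument (Steps 2--3) is correct and is the direct derivation the paper intends; the paper states the corollary without proof as an immediate consequence of Lemma~\ref{lem:principal-counting}. Optional sampling at the $k$-th principal time gives $\EE_{\tau}(v\chi_{\{N\geq k\}}) = \EE_{\tau}(v_{\sigma_k}\chi_{\{\sigma_k<\infty\}}) \leq d\,\EE_{\tau}(\chi_{\{N\geq k\}})$, the lemma supplies the factor $2^{-(k-1)}$, and $a\,\EE_{\tau}(\chi_{\{\sigma<\infty\}}) \leq \EE_{\tau}(u_{\sigma}\chi_{\{\sigma<\infty\}}) \leq u_{\tau}$ finishes. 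Step~1 and the abandoned first half of Step~2 are never used and should be deleted; in particular, $\{N\geq k\}$ is not $\cF_{\sigma}$-measurable for $k\geq 2$, only $\{\sigma<\infty\}$ is.
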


This gives the following weak $L^2$ estimate.

\begin{lem}
	\label{lem:principal-weak}
	Let $b$ be a principal sequence and let $\tau \geq \ell$ be a stopping time. Let $q$ be a bounded nonnegative $\cF_{\tau}$-measurable function. Suppose the weights $u$, $v$ satisfy, for every $n \geq \ell$,
	\[
		u_n v_n \leq q \quad \text{on } \dakh{\tau \leq n, b_n = 1}.
	\]
	Denote
	\[
		H = \sum_{n \geq \tau} b_n u_n.
	\]
	Then, for every positive $\cF_{\tau}$-measurable function $t$,
	\[
		t^2 \EE_{\tau}\xkh{v \chi_{\dakh{H > t}}} \leq 288 q u_{\tau}
	\]
	on $\dakh{\tau < \infty}$.
\end{lem}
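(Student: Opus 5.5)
The plan is to decompose the event $\dakh{H > t}$ according to the dyadic size of the terms $u_n$ with $b_n = 1$. One part comes from a single large term, handled by a stopping time. The other comes from many moderate terms, handled by Corollary~\ref{cor:principal-weighted-counting} applied to sub-principal sequences. Throughout we work on $\dakh{\tau < \infty}$. The inputs used are: $t$ and $q$ are $\cF_\tau$-measurable, and on $\dakh{\tau \leq n, b_n = 1}$ the hypothesis gives $v_n \leq q/u_n$.

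\emph{Large terms.} Fix an integer $i_0 \geq 1$, chosen below. Let
\[
\sigma = \inf\dakh{n \geq \tau : b_n = 1,\ u_n > 2^{-i_0} t}.
\]
Since $\dakh{\sigma = n} \in \cF_n$, we get
\[
\EE_\tau\xkh{v \chi_{\dakh{\sigma < \infty}}} = \EE_\tau\xkh{v_\sigma \chi_{\dakh{\sigma < \infty}}} \leq \EE_\tau\xkh{\frac{q}{u_\sigma} \chi_{\dakh{\sigma < \infty}}}.
\]
On $\dakh{\sigma < \infty}$ we have $1 < 2^{i_0} u_\sigma / t$, so $\frac{1}{u_\sigma} \leq \frac{2^{2 i_0} u_\sigma}{t^2}$. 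Moreover $\EE_\tau\xkh{u_\sigma \chi_{\dakh{\sigma < \infty}}} \leq u_\tau$. Together these bound this piece by $4^{i_0} q u_\tau / t^2$.

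\emph{Moderate terms.} Off $\dakh{\sigma < \infty}$, every $n \geq \tau$ with $b_n = 1$ and $u_n > 0$ lies in a level
\[
L_i = \dakh{2^{-i-1} t < u_n \leq 2^{-i} t}
\]
for some $i \geq i_0$. Terms with $u_n = 0$ contribute nothing.

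Define $c^{(i)}_n = b_n \chi_{\dakh{u_n \in L_i}}$ and $N_i = \sum_{n \geq \tau} c^{(i)}_n$. Each $c^{(i)}$ is adapted, takes values in $\dakh{0,1}$ and satisfies $c^{(i)} \leq b$. By the remark after \ref{item:bn-car}, it is therefore a principal sequence.

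On $\dakh{\tau \leq n, c^{(i)}_n = 1}$ we have $u_n \geq a_i := 2^{-i-1} t$ and $v_n \leq d_i := q / a_i$. Both $a_i$ and $d_i$ are $\cF_\tau$-measurable. Corollary~\ref{cor:principal-weighted-counting} then yields
\[
\EE_\tau\xkh{v \chi_{\dakh{N_i \geq K_i}}} \leq 2^{1 - K_i} \frac{q}{a_i^2} u_\tau = 2^{1 - K_i} 4^{i+1} \frac{q u_\tau}{t^2}.
\]

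Now choose integers $K_i \geq 1$ and $i_0$ with two properties:
\[
\sum_{i \geq i_0} 2^{-i} (K_i - 1) \leq 1 \quad \text{and} \quad \sum_{i \geq i_0} 2^{-K_i} 4^i < \infty.
\]
For instance, take $K_i = \lceil 2^{i/2} \rceil$ and $i_0$ large enough; this is possible since $\sum_i 2^{-i/2} < \infty$. With this choice, on $\dakh{\sigma = \infty}$ and $\dakh{N_i \leq K_i - 1 \text{ for all } i}$ we get
\[
H \leq \sum_{i \geq i_0} 2^{-i} t (K_i - 1) \leq t.
\]
Hence
\[
\dakh{H > t} \subset \dakh{\sigma < \infty} \cup \bigcup_{i \geq i_0} \dakh{N_i \geq K_i}.
\]
Summing the two bounds gives $t^2 \EE_\tau\xkh{v \chi_{\dakh{H > t}}} \leq C q u_\tau$ with an absolute constant $C$.

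The main obstacle is the bookkeeping of constants: it requires the two competing constraints on $K_i$ to hold simultaneously, and obtaining the specific value $288$ needs a sharper choice. I would first try a finer split, for example $K_i = i - i_0 + 2$ with the weight budget $\sum 2^{-i} K_i$ normalized via thresholds $t/\theta$ in place of $t/2$. I would tune these to reach $288$, but would accept a larger absolute constant if the optimization proves awkward, since only an absolute constant matters downstream.
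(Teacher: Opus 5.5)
Your argument follows the paper's structure. The event $\{\sigma<\infty\}$ is the paper's $\{N\ge 1\}$ piece, i.e.\ the $k=1$ case of Corollary~\ref{cor:principal-weighted-counting}, reproved by optional stopping. The dyadic levels are handled, as in the paper, by sub-principal sequences with a level-dependent count. With $K_i=\lceil 2^{i/2}\rceil$ and $i_0$ large the argument is correct, but it proves the lemma only with some larger absolute constant. For example, with $i_0=4$ your bound is $4^{4}+8\sum_{i\ge 4}2^{-K_i}4^{i}\approx 680$, whereas the statement asserts $288$. This suffices for the paper's downstream use, since $576$ is absorbed into $\lesssim$ in Lemma~\ref{lem:good-lambda}, but it does not give the stated constant.

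The refinement you propose for reaching $288$ fails. Level $i$ costs a factor $q/a_i^2\sim 4^{i}q/t^2$, so its contribution $2^{1-K_i}4^{i+1}qu_\tau/t^2$ is summable only if $K_i$ grows with slope strictly greater than $2$ in $i$. With $K_i=i-i_0+2$ the terms grow like $2^{i}$ and the series diverges.

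The paper's choice gives $288$ exactly. Put the top threshold at $h=t/12$, which is not a dyadic fraction of $t$, use the levels $2^{-m-1}h<u_n\le 2^{-m}h$ for $m\ge 0$, and take $K_m=3m+4$. Then:
\begin{itemize}
\item the large part costs $qu_\tau/h^2$;
\item level $m$ costs $2^{1-(3m+4)}4^{m+1}qu_\tau/h^2=2^{-m-1}qu_\tau/h^2$;
\item the total cost is therefore $2qu_\tau/h^2$;
\item on the good event, $H\le h\sum_{m\ge 0}2^{-m}(3m+3)=12h=t$.
\end{itemize}
This yields $t^2\EE_\tau(v\chi_{\{H>t\}})\le 2\cdot 12^2\,qu_\tau=288\,qu_\tau$.

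There is also a small adaptedness slip. Since $t$ is only $\cF_\tau$-measurable, $b_n\chi_{\{2^{-i-1}t<u_n\le 2^{-i}t\}}$ is not $\cF_n$-measurable for $n<\tau$. You must set $c^{(i)}_n=0$ for $n<\tau$, as the paper does, before invoking the remark that $c^{(i)}$ is a principal sequence. This is harmless, since only $n\ge\tau$ enters $N_i$.
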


\begin{proof}
	Let $h$ be a positive $\cF_{\tau}$-measurable function. For $n \geq \tau$, define
	\begin{align*}
		c_n & = b_n \chi_{\dakh{u_n > h}}, \\
		c_n^{(m)} & = b_n \chi_{\dakh{2^{-m - 1}h < u_n \leq 2^{-m}h}}, \quad m \geq 0,
	\end{align*}
	and set $c_n = c_n^{(m)} = 0$ for $n < \tau$. Denote
	\[
		N = \sum_{n \geq \tau} c_n, \quad N_m = \sum_{n \geq \tau} c_n^{(m)}.
	\]
	Since $c_n \leq b_n$ and $c_n^{(m)} \leq b_n$, these are principal sequences. We have
	\begin{align*}
		u_n > h, \quad v_n & \leq \frac{q}{h} && \text{on } \dakh{c_n = 1}, \\
		u_n > 2^{-m - 1}h, \quad v_n & \leq \frac{q}{2^{-m - 1}h} && \text{on } \dakh{c_n^{(m)} = 1}.
	\end{align*}
	By Corollary~\ref{cor:principal-weighted-counting},
	\begin{align*}
		\EE_{\tau}\xkh{v \chi_{\dakh{N \geq 1}}} & \leq \frac{q}{h^2} u_{\tau}, \\
		\EE_{\tau}\xkh{v \chi_{\dakh{N_m \geq k}}} & \leq \frac{1}{2^{k - 1}} \frac{q}{\xkh{2^{-m - 1}h}^2} u_{\tau} = 2^{2m + 3 - k} \frac{q}{h^2} u_{\tau}, \\
		\EE_{\tau}\xkh{v \chi_{\dakh{N_m \geq 3m + 4}}} & \leq 2^{-m - 1} \frac{q}{h^2} u_{\tau}, \quad \text{take } k = 3m + 4. 
	\end{align*}
	On $\dakh{N = 0} \cap \bigcap_{m \geq 0} \dakh{N_m < 3m + 4}$,
	\begin{align*}
		H & = \sum_{m \geq 0} \sum_{n \geq \tau} c_n^{(m)} u_n \\
		& \leq h \sum_{m \geq 0} 2^{-m} N_m \\
		& \leq h \sum_{m \geq 0} 2^{-m}(3m + 3) = 12h.
	\end{align*}
	Taking complements in $\dakh{N = 0} \cap \bigcap_{m \geq 0} \dakh{N_m < 3m + 4} \subset \dakh{H \leq 12 h}$ gives
	\[
		\dakh{H > 12h} \subseteq \dakh{N \geq 1} \cup \bigcup_{m \geq 0} \dakh{N_m \geq 3m + 4}.
	\]
	Using the estimates above, we obtain
	\begin{align*}
		\EE_{\tau}\xkh{v \chi_{\dakh{H > 12h}}} & \leq \EE_{\tau}\xkh{v \chi_{\dakh{N \geq 1}}} + \sum_{m \geq 0} \EE_{\tau}\xkh{v \chi_{\dakh{N_m \geq 3m + 4}}} \\
		& \leq \xkh{1 + \sum_{m \geq 0} 2^{-m - 1}} \frac{q}{h^2} u_{\tau} = \frac{2q}{h^2} u_{\tau}.
	\end{align*}
	Taking $h = \frac{t}{12}$ completes the proof.
\end{proof}

\subsection{A \texorpdfstring{good-$\lambda$-type}{good-lambda-type} estimate}

\begin{lem}
	\label{lem:good-lambda}
	Assume \eqref{eq:wCON}. Let $b$ be a principal sequence and denote
	\[
		H = \sum_{n \geq \ell} b_n u_n, \quad s_n = \sum_{\ell \leq j \leq n} b_j u_j, \quad s_{\ell - 1} = 0, \quad L = \sup_{n \geq \ell} b_n u_n.
	\]
	For $\lambda > 0$, let
	\[
		\gamma_0 = \inf\dakh{n \geq \ell : s_{n - 1} \leq \lambda < s_n, \, u_n \leq \frac{\lambda}{2}}.
	\]
	Then
	\begin{equation}
		\label{eq:good-lambda}
		\lambda^2 \EE_{\ell}\xkh{v \chi_{\dakh{H > 4\lambda, \, L \leq \frac{\lambda}{2}}}} \lesssim \EE_{\ell}\xkh{\chi_{\dakh{\gamma_0 < \infty}} u_{\gamma_0} \psi(\rho_{\gamma_0})} + \EE_{\ell}\xkh{\sum_{n \geq \ell} u_n e_n \chi_{\dakh{\lambda < s_n \leq 4\lambda}}},
	\end{equation}
\end{lem}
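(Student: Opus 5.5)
The plan is to reduce the estimate to one application of the weak $L^2$ bound of Lemma~\ref{lem:principal-weak}, started at the crossing time $\gamma_0$, plus a correction for the times at which $\psi(\rho_n)$ jumps, which Corollary~\ref{cor:stopped-defect} charges to the defect sum. The second step, the jump correction, is the part I am least sure closes.

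\textbf{Step 1: $\gamma_0$ is finite on the event.} The sequence $s_n$ increases only at times with $b_n=1$, and then by $u_n=b_nu_n$. On $\{L\le\lambda/2\}$ every increment is at most $\lambda/2$. So on $\{H>4\lambda,\ L\le\lambda/2\}$ there is a first $n$ with $s_{n-1}\le\lambda<s_n$, and it satisfies $b_n=1$ and $u_n\le\lambda/2$. Hence $\gamma_0<\infty$ and $s_{\gamma_0}\le 3\lambda/2$. Consequently the tail $H_0:=\sum_{n>\gamma_0}b_nu_n$ exceeds $5\lambda/2$ on this event. Since $\{\gamma_0<\infty\}\in\cF_{\gamma_0}$, it suffices to bound $\lambda^2\EE_{\gamma_0}\bigl(v\chi_{\{H_0>5\lambda/2\}}\bigr)$ on $\{\gamma_0<\infty\}$ and then take $\EE_\ell$.

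\textbf{Step 2: the no-jump regime.} The weak lemma needs an $\cF_\tau$-measurable bound $q$ for $u_nv_n$, whereas \eqref{eq:wCON} only gives $u_nv_n\le\psi(\rho_n)$ with $\rho_n$ varying in $n$. I freeze $q=2\psi(\rho_{\gamma_0})$ up to the stopping time
\[
\sigma_1=\inf\dakh{n>\gamma_0 : b_n=1,\ \psi(\rho_n)>2\psi(\rho_{\gamma_0})}\wedge\inf\dakh{n : s_n>4\lambda}.
\]
The truncated sequence $c_n=b_n\chi_{\{\gamma_0\le n<\sigma_1\}}$ is adapted, satisfies $c_n\le b_n$, and so is again a principal sequence. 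On $\{c_n=1\}$ we have $u_nv_n\le 2\psi(\rho_{\gamma_0})$. Lemma~\ref{lem:principal-weak} with $\tau=\gamma_0$ and $t=\lambda$ then gives
\[
\lambda^2\EE_{\gamma_0}\Bigl(v\chi_{\{\sum_n c_nu_n>\lambda\}}\Bigr)\le 576\,\psi(\rho_{\gamma_0})u_{\gamma_0}.
\]
This produces the first term on the right of \eqref{eq:good-lambda}.

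\textbf{Step 3: the jump regime.} What remains is the part of the event where the pre-jump sum is at most $\lambda$. There a jump time $\sigma_1<\infty$ must occur while $s\le4\lambda$, and the tail after $\sigma_1$ still exceeds $3\lambda/2$. I iterate. Set $\sigma_{i+1}$ to be the next time $\psi(\rho)$ doubles relative to $\psi(\rho_{\sigma_i})$, still capped when $s$ exceeds $4\lambda$. On each segment $[\sigma_i,\sigma_{i+1})$ I apply Lemma~\ref{lem:principal-weak} with $q=2\psi(\rho_{\sigma_i})$ and threshold $t_i=c\lambda 2^{-i/4}$, chosen so that $\sum_i t_i$ stays below the available excess. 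This bounds the contribution of segment $i$ by $2^{i/2}\psi(\rho_{\sigma_i})u_{\sigma_i}$, up to constants.

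For $i\ge1$, inequality \eqref{eq:stopped-jump} with $\sigma=\sigma_{i-1}$ and $\tau=\sigma_i$, together with $\rho\ge1$, gives
\[
\EE_{\sigma_{i-1}}\bigl(u_{\sigma_i}\psi(\rho_{\sigma_i})\bigr)\le 48\,\EE_{\sigma_{i-1}}\Bigl(\sum_{\sigma_{i-1}\le n<\sigma_i}u_ne_n\Bigr).
\]
The ranges $[\sigma_{i-1},\sigma_i)$ are disjoint and lie inside $\{\lambda<s_n\le4\lambda\}$, apart from the crossing time, which is absorbed into the first term. Summing over $i$ would therefore give the second term of \eqref{eq:good-lambda}.

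\textbf{Main obstacle.} The difficulty is the factor $2^{i/2}$ from the shrinking thresholds $t_i$. One option is to gain decay elsewhere: each doubling multiplies $\psi$ by at least $2$, and $t^2\psi(t)$ is increasing, so I would try to use the quantity $\rho_{\sigma_i}\psi(\rho_{\sigma_i})$ (rather than $\psi(\rho_{\sigma_i})$), which \eqref{eq:stopped-jump} actually controls, to absorb the geometric factor. The alternative I would try first is to avoid iterating altogether. At the first jump time, bound $\lambda^2\EE_{\sigma_1}(v\chi_{\{\text{tail}>3\lambda/2\}})$ using Lemma~\ref{lem:carPleqMAX} for $b$, which gives $\EE_{\sigma_1}\sum_{n\ge\sigma_1}b_nu_n\le 2u^M_{\sigma_1}=2\rho_{\sigma_1}u_{\sigma_1}$, together with a single application of \eqref{eq:stopped-jump}. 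The aim is a bound of the form $\lambda^2 v_{\sigma_1}\lesssim\rho_{\sigma_1}\psi(\rho_{\sigma_1})u_{\sigma_1}$, which matches exactly what \eqref{eq:stopped-jump} controls.
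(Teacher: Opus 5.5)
\textbf{There is a genuine gap in Step 3.} Your Steps 1 and 2 and the block structure agree with the paper's proof: your $\sigma_i$ are its $\gamma_m$, capped at $\tau=\inf\{n\ge\ell : s_n>4\lambda\}$. Step 3 does not close, and the obstacle you flag is real for the thresholds you chose.

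With deterministic $t_i=c\lambda 2^{-i/4}$, block $i$ contributes about $2^{i/2}u_{\sigma_i}\psi(\rho_{\sigma_i})$. But \eqref{eq:stopped-jump} charges to the defect only $u_{\sigma_i}\rho_{\sigma_i}\psi(\rho_{\sigma_i})$, so you would need $2^{i/2}\lesssim\rho_{\sigma_i}$. This goes the wrong way. From $\psi(\rho_{\sigma_{i+1}})>2\psi(\rho_{\sigma_i})$ and $\psi(t)\ge\psi(s)s^2/t^2$ one gets $\rho_{\sigma_{i+1}}<2^{-1/2}\rho_{\sigma_i}$. So $\rho$ \emph{decreases} along the jumps, and the late blocks, where $2^{i/2}$ is largest, are the ones where $\rho_{\sigma_i}$ may be near $1$. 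The only comparison available is $2^{i/2}<\rho_{\gamma_0}/\rho_{\sigma_i}$, so the ratio $2^{i/2}/\rho_{\sigma_i}$ is controlled only by $\rho_{\gamma_0}/\rho_{\sigma_i}^2$, which is not bounded. In your display you also discard $\rho_{\sigma_i}\ge 1$, which is exactly the factor that has to do the absorbing.

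\textbf{Your fallback does not work either.} Lemma~\ref{lem:carPleqMAX} bounds the unweighted conditional mean of the tail, whereas you need the $v$-measure of its superlevel set. After $\sigma_1$ the bound $u_nv_n\le\psi(\rho_n)$ can deteriorate again, so some iteration is unavoidable. Moreover, given only \eqref{eq:wCON}, the target $\lambda^2v_{\sigma_1}\lesssim\rho_{\sigma_1}\psi(\rho_{\sigma_1})u_{\sigma_1}$ would require $\lambda^2\lesssim\rho_{\sigma_1}u_{\sigma_1}^2$, which fails when $u_{\sigma_1}\ll\lambda$.

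\textbf{The missing idea is random, $\cF_{\sigma_i}$-measurable thresholds scaled by $\rho$.} Keep $t_0$ from Step 2 and set $t_i=c\lambda\rho_{\sigma_i}^{-1/2}$ for $i\ge 1$.
\begin{itemize}
\item Since $\rho\ge1$ and $\rho$ drops by a factor $2^{-1/2}$ at each jump, there is a last jump index $M$, and $\rho_{\sigma_i}\ge 2^{(M-i)/2}$.
\item Hence $\sum_{1\le i\le M}\rho_{\sigma_i}^{-1/2}\le(1-2^{-1/4})^{-1}$, and with $c=(1-2^{-1/4})/2$ you get $\sum_{i\ge1}t_i\le\lambda/2$.
\item This is in effect a backward indexing from the last jump. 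Deterministic thresholds cannot implement it, because $M$ is not $\cF_{\sigma_i}$-measurable.
\item Now $\lambda^2/t_i^2=c^{-2}\rho_{\sigma_i}$. So Lemma~\ref{lem:principal-weak} bounds block $i$ by a constant times $\rho_{\sigma_i}\psi(\rho_{\sigma_i})u_{\sigma_i}$.
\item That is exactly the quantity \eqref{eq:stopped-jump}, applied between $\sigma_{i-1}$ and $\sigma_i\wedge\tau$, charges to $\sum_{\sigma_{i-1}\le n<\sigma_i\wedge\tau}u_ne_n$.
\item These intervals are disjoint and lie in $[\gamma_0,\tau)$, where $\lambda<s_n\le4\lambda$. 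This range includes the crossing time, so nothing needs absorbing.
\end{itemize}

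\textbf{Two smaller fixes.}
\begin{itemize}
\item Define $\sigma_{i+1}=\infty$ when no doubling occurs before $\tau$, rather than capping it at $\tau$. Then the jump condition holds on $\{\sigma_i<\infty\}$, which is what \eqref{eq:stopped-jump} requires.
\item The sum that must exceed $\sum_i t_i$ is the truncated one, $\sum_{\gamma_0\le n<\tau}b_nu_n>5\lambda/2$, not your untruncated tail. The truncated bound follows from $L\le\lambda/2$ applied at $n=\tau$.
\end{itemize}
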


\begin{proof}
	\proofpart Let
	\[
		\tau = \inf\dakh{n \geq \ell : s_n > 4\lambda}.
	\]
	By the definitions of $\gamma_0$ and $\tau$, we have
	\begin{gather*}
		\lambda < s_{\gamma_0} \leq \lambda + u_{\gamma_0} \leq \frac{3\lambda}{2}, \\
		\lambda < s_n \leq 4\lambda, \qquad \text{for } \gamma_0 \leq n < \tau.
	\end{gather*}
	On $\dakh{H > 4\lambda, \, L \leq \frac{\lambda}{2}}$, the sum between these stopping times satisfies
	\begin{align*}
		\sum_{\gamma_0 \leq n < \tau} b_n u_n & = s_{\tau} - u_{\tau} - s_{\gamma_0 - 1} \\
		& > 4\lambda - \frac{\lambda}{2} - \lambda = \frac{5\lambda}{2}.
	\end{align*}
	
	\proofpart For $m \geq 0$, define
	\[
		\gamma_{m + 1} = \inf\dakh{n > \gamma_m : b_n = 1, n < \tau, \psi(\rho_n) > 2 \psi\xkh{\rho_{\gamma_m}}},
	\]
	and let
	\[
		c_n^{(m)} = b_n \chi_{\dakh{\gamma_m \leq n < \gamma_{m + 1} \wedge \tau}}, \qquad T_m^{\lambda} = \sum_{n \geq \ell} c_n^{(m)} u_n.
	\]
	These intervals partition $[\gamma_0, \tau)$, so the preceding estimate gives
	\begin{align*}
		\sum_{m \geq 0} T_m^{\lambda} & = \sum_{\gamma_0 \leq n < \tau} b_n u_n, \\
		\dakh{H > 4\lambda, L \leq \frac{\lambda}{2}} & \subseteq \dakh{\sum_{m \geq 0} T_m^{\lambda} > \frac{5\lambda}{2}}.
	\end{align*}
	Since $c^{(m)}$ is a principal sequence and
	\[
		u_n v_n \leq \psi(\rho_n) \leq 2 \psi\xkh{\rho_{\gamma_m}} \quad \text{on } \dakh{c_n^{(m)} = 1},
	\]
	Lemma~\ref{lem:principal-weak} gives, for every positive $\cF_{\gamma_m}$-measurable function $t$,
	\begin{equation}
		\label{eq:block-weak}
		t^2 \EE_{\gamma_m}\xkh{v \chi_{\dakh{T_m^{\lambda} > t}}} \leq 288 \cdot 2 \psi\xkh{\rho_{\gamma_m}} u_{\gamma_m} = 576 \cdot u_{\gamma_m} \psi\xkh{\rho_{\gamma_m}}.
	\end{equation}
	
	\proofpart Let $t_m$ be positive $\cF_{\gamma_m}$-measurable functions such that
	\[
		\sum_{\substack{m \geq 0 \\ \gamma_m < \infty}} t_m \leq \lambda.
	\]
	This gives the following inclusions:
	\begin{align*}
		\dakh{H > 4\lambda, L \leq \frac{\lambda}{2}}
		& \subseteq \dakh{\sum_{\substack{m \geq 0 \\ \gamma_m < \infty}} T_m^{\lambda} > \frac{5\lambda}{2}} \\
		& \subseteq \dakh{\sum_{\substack{m \geq 0 \\ \gamma_m < \infty}} T_m^{\lambda} > \sum_{\substack{m \geq 0 \\ \gamma_m < \infty}} t_m} \\
		& \subseteq \bigcup_{m \geq 0} \dakh{\gamma_m < \infty, T_m^{\lambda} > t_m}.
	\end{align*}
	By \eqref{eq:block-weak},
	\begin{align*}
		\lambda^2 \EE_{\ell}\xkh{v \chi_{\dakh{H > 4\lambda, L \leq \frac{\lambda}{2}}}} & \leq \lambda^2 \EE_{\ell}\xkh{v \chi_{\bigcup_{m \geq 0} \dakh{\gamma_m < \infty, T_m^{\lambda} > t_m}}} \\
		& \leq \EE_{\ell}\xkh{\sum_{m \geq 0} \chi_{\dakh{\gamma_m < \infty}} \lambda^2 v \chi_{\dakh{T_m^{\lambda} > t_m}}} \\
		& = \EE_{\ell}\xkh{\sum_{m \geq 0} \chi_{\dakh{\gamma_m < \infty}} \frac{\lambda^2}{t_m^2} t_m^2 \EE_{\gamma_m}\xkh{v \chi_{\dakh{T_m^{\lambda} > t_m}}}} \\
		& \leq 576 \cdot \EE_{\ell}\xkh{\sum_{m \geq 0} \chi_{\dakh{\gamma_m < \infty}} \frac{\lambda^2}{t_m^2} u_{\gamma_m} \psi\xkh{\rho_{\gamma_m}}}.
	\end{align*}
	
	\proofpart To apply Corollary~\ref{cor:stopped-defect} to the preceding sum, we choose $t_m$ so that $\frac{\lambda^2}{t_m^2} \approx \rho_{\gamma_m}$ for $m \geq 1$. By the stopping rule and the scaling property of $\psi$, we have
	\begin{align*}
		\psi\xkh{\rho_{\gamma_{m + 1}}} & > 2 \psi\xkh{\rho_{\gamma_m}} \\
		& \geq 2 \psi\xkh{\rho_{\gamma_{m + 1}}} \frac{\rho_{\gamma_{m + 1}}^2}{\rho_{\gamma_m}^2}, \qquad \gamma_{m + 1} < \infty.
	\end{align*}
	Thus,
	\[
		\rho_{\gamma_{m + 1}} < 2^{-\frac{1}{2}} \rho_{\gamma_m}, \qquad \gamma_{m + 1} < \infty.
	\]
	On $\dakh{\gamma_1 < \infty}$, the preceding inequality gives
	\[
		1 \leq \rho_{\gamma_m} \leq 2^{-\frac{m - 1}{2}} \rho_{\gamma_1}, \qquad m \geq 1, \quad \gamma_m < \infty.
	\]
	Hence there is a largest index $M$ such that $\gamma_M < \infty$. For $1 \leq m \leq M$, we have
	\[
		\rho_{\gamma_m}^{-\frac{1}{2}} \leq 2^{-\frac{M - m}{4}} \rho_{\gamma_M}^{-\frac{1}{2}} \leq 2^{-\frac{M - m}{4}}.
	\]
	Therefore,
	\begin{align*}
		\sum_{\substack{m \geq 1 \\ \gamma_m < \infty}} \rho_{\gamma_m}^{-\frac{1}{2}} & = \sum_{m = 1}^M \rho_{\gamma_m}^{-\frac{1}{2}} \\
		& \leq \sum_{m = 1}^M 2^{-\frac{M - m}{4}} \\
		& = \sum_{j = 0}^{M - 1} 2^{-\frac{j}{4}} \leq \frac{1}{1 - 2^{-\frac{1}{4}}}.
	\end{align*}
	We take\footnote{We treat $m = 0$ separately because the estimate below for $m \geq 1$ uses $\EE_{\gamma_{m - 1}}$.}
	\[
		c = \frac{1 - 2^{-\frac{1}{4}}}{2}, \qquad t_0 = \frac{\lambda}{2}, \qquad t_m = c\lambda \rho_{\gamma_m}^{-\frac{1}{2}}, \quad m \geq 1.
	\]
	Then
	\begin{gather*}
		t_0 + \sum_{\substack{m \geq 1 \\ \gamma_m < \infty}} t_m \leq \frac{\lambda}{2} + \frac{c\lambda}{1 - 2^{-\frac{1}{4}}} = \lambda, \\
		\frac{\lambda^2}{t_0^2} = 4, \qquad \frac{\lambda^2}{t_m^2} = c^{-2} \rho_{\gamma_m}, \quad m \geq 1.
	\end{gather*}
	
	\proofpart For $m \geq 1$, applying Corollary~\ref{cor:stopped-defect} between $\gamma_{m - 1}$ and $\gamma_m \wedge \tau$, we have
	\[
		\begin{aligned}
			& \EE_{\gamma_{m - 1}}\xkh{\chi_{\dakh{\gamma_m < \infty}} u_{\gamma_m} \rho_{\gamma_m} \psi\xkh{\rho_{\gamma_m}}} \\
			& \quad \leq 48 \cdot \EE_{\gamma_{m - 1}}\xkh{\sum_{\gamma_{m - 1} \leq n < \gamma_m \wedge \tau} u_n e_n}.
		\end{aligned}
	\]
	The stopping intervals are disjoint and lie in $[\gamma_0, \tau)$, where $\lambda < s_n \leq 4\lambda$. Hence,
	\[
		\sum_{m \geq 1} \chi_{\dakh{\gamma_{m - 1} \leq n < \gamma_m \wedge \tau}} \leq \chi_{\dakh{\lambda < s_n \leq 4\lambda}}.
	\]
	Taking $\EE_{\ell}$ in the stopped estimate and summing over $m \geq 1$, we obtain
	\begin{align*}
		\EE_{\ell}\xkh{\sum_{m \geq 1} \chi_{\dakh{\gamma_m < \infty}} u_{\gamma_m} \rho_{\gamma_m} \psi\xkh{\rho_{\gamma_m}}} & \leq 48 \cdot \EE_{\ell}\xkh{\sum_{m \geq 1} \sum_{\gamma_{m - 1} \leq n < \gamma_m \wedge \tau} u_n e_n} \\
		& = 48 \cdot \EE_{\ell}\xkh{\sum_{n \geq \ell} u_n e_n \sum_{m \geq 1} \chi_{\dakh{\gamma_{m - 1} \leq n < \gamma_m \wedge \tau}}} \\
		& \leq 48 \cdot \EE_{\ell}\xkh{\sum_{n \geq \ell} u_n e_n \chi_{\dakh{\lambda < s_n \leq 4\lambda}}}.
	\end{align*}
	
	\proofpart Combining the above estimates, we obtain
	\begin{align*}
		\lambda^2 \EE_{\ell}\xkh{v \chi_{\dakh{H > 4\lambda, L \leq \frac{\lambda}{2}}}} & \leq 576 \EE_{\ell}\xkh{\sum_{m \geq 0} \chi_{\dakh{\gamma_m < \infty}} \frac{\lambda^2}{t_m^2} u_{\gamma_m} \psi\xkh{\rho_{\gamma_m}}} \\
		& \leq 576 \xkh{4 \EE_{\ell}\xkh{\chi_{\dakh{\gamma_0 < \infty}} u_{\gamma_0} \psi(\rho_{\gamma_0})} + c^{-2} \EE_{\ell}\xkh{\sum_{m \geq 1} \chi_{\dakh{\gamma_m < \infty}} u_{\gamma_m} \rho_{\gamma_m} \psi\xkh{\rho_{\gamma_m}}}} \\
		& \lesssim \EE_{\ell}\xkh{\chi_{\dakh{\gamma_0 < \infty}} u_{\gamma_0} \psi(\rho_{\gamma_0})} + \EE_{\ell}\xkh{\sum_{n \geq \ell} u_n e_n \chi_{\dakh{\lambda < s_n \leq 4\lambda}}}. \qedhere
	\end{align*}
\end{proof}

\section{Proof of the Main Result}
\label{sec:testing}

We integrate \eqref{eq:good-lambda} with respect to $\lambda$ to obtain the square estimate for principal sequences.

\begin{prop}
	\label{prop:principal-square}
	Assume \eqref{eq:wCON}. For every principal sequence $b$ and $\ell \in \NN$,
	\[
		\EE_{\ell}\xkh{v \xkh{\sum_{n \geq \ell} b_n u_n}^2} \lesssim C_{\psi} u_{\ell}.
	\]
\end{prop}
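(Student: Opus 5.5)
We integrate the good-$\lambda$ inequality \eqref{eq:good-lambda} against $\dd{\lambda}/\lambda$ over $(0,\infty)$ and handle separately the part of $\{H > 4\lambda\}$ on which $L > \lambda/2$. With $H = \sum_{n \geq \ell} b_n u_n$, the layer cake formula gives
\[
	\EE_{\ell}\xkh{v H^2} = 32 \int_0^{\infty} \lambda \EE_{\ell}\xkh{v \chi_{\dakh{H > 4\lambda}}} \dd{\lambda} \leq 32 \int_0^\infty \lambda \EE_{\ell}\xkh{v \chi_{\dakh{H > 4\lambda, L \leq \frac{\lambda}{2}}}} \dd{\lambda} + 32 \int_0^\infty \lambda \EE_{\ell}\xkh{v \chi_{\dakh{L > \frac{\lambda}{2}}}} \dd{\lambda}.
\]

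The second term equals a constant times $\EE_{\ell}(v L^2)$. Since $L^2 \leq \sum_{n \geq \ell} b_n u_n^2$, we get $\EE_\ell(vL^2) \leq \EE_{\ell}\xkh{\sum_{n\ge\ell} b_n u_n^2 v_n}$, because $b_n u_n^2$ is $\cF_n$-measurable. By \eqref{eq:wCON} this is at most $\EE_\ell(\sum_n b_n u_n \psi(\rho_n)) \leq 8C_\psi u_\ell$ by Corollary~\ref{cor:principal-embedding}. (If $\EE_\ell(vH^2)$ is not known a priori to be finite, we first truncate the sum to $n\le N$; then no absorption is needed, since the argument is direct.)

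For the first term, \eqref{eq:good-lambda} bounds $\lambda\EE_\ell(\cdots)$ by $\lambda^{-1}$ times the right-hand side, and we treat its two pieces in turn.

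\textbf{Defect piece.} Fubini gives
\[
	\EE_\ell\xkh{\sum_n u_n e_n \int_0^\infty \chi_{\dakh{\lambda < s_n \leq 4\lambda}} \frac{\dd{\lambda}}{\lambda}} = \log 4 \cdot \EE_\ell\xkh{\sum_n u_n e_n},
\]
which is at most $\log 4\,(C_\psi-1)u_\ell$ by \eqref{eq:defect-total}.

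\textbf{$\gamma_0$ piece.} Here $\gamma_0 = \gamma_0(\lambda)$ is the unique index $n$ with $b_n = 1$, $s_{n-1} \leq \lambda < s_n$ and $u_n \leq \lambda/2$. We write
\[
	\chi_{\dakh{\gamma_0<\infty}} u_{\gamma_0}\psi(\rho_{\gamma_0}) \leq \sum_{n} b_n u_n\psi(\rho_n)\chi_{\dakh{s_{n-1}\le\lambda<s_n,\ 2u_n\le\lambda}}.
\]
Integrating in $\dd\lambda/\lambda$, each $n$ contributes $b_nu_n\psi(\rho_n)\log(s_n/\max(s_{n-1},2u_n))$. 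Since $s_n = s_{n-1}+u_n$ on $\{b_n = 1\}$, we have $s_n \leq 2\max(s_{n-1},u_n)$, so the logarithm is at most $\log 2$. (If $2u_n > s_n$ the interval is empty.) Hence this piece is at most $\log 2\cdot\EE_\ell(\sum_n b_nu_n\psi(\rho_n)) \leq 8\log 2\, C_\psi u_\ell$, again by Corollary~\ref{cor:principal-embedding}.

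Summing the three contributions gives $\EE_\ell(vH^2) \lesssim C_\psi u_\ell$, using $C_\psi \geq 2$ to absorb the constants.

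The main obstacle is the bookkeeping in the $\gamma_0$ piece. We must check that for fixed $\omega$ and $n$ the set of $\lambda$ with $\gamma_0(\lambda) = n$ has logarithmic length $O(1)$, which is exactly where the condition $u_{\gamma_0}\le\lambda/2$ in the definition of $\gamma_0$ is used. We also need measurability of the integrands in $\lambda$ to justify Fubini, which is routine since everything is a countable sum of indicator functions of intervals in $\lambda$.
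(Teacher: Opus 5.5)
Your proposal is correct and follows the paper's proof essentially step by step. It uses the same layer-cake split into $L \le \lambda/2$ and $L > \lambda/2$, the same bound $\EE_{\ell}(vL^2) \le \EE_{\ell}\xkh{\sum_{n \geq \ell} b_n u_n \psi(\rho_n)}$ via \eqref{eq:wCON}, and the same Fubini integration of \eqref{eq:good-lambda} against $\dd{\lambda}/\lambda$, closed by Corollary~\ref{cor:principal-embedding} and \eqref{eq:defect-total}; the only cosmetic difference is that you bound the $\dd{\lambda}/\lambda$-measure of $\dakh{\lambda : \gamma_0(\lambda) = n}$ by $\log 2$, whereas the paper bounds it by $(s_n - s_{n-1})/(2u_n) = 1/2$.
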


\begin{proof}
	Use the notation of Lemma~\ref{lem:good-lambda}, writing $\gamma_0 = \gamma_0(\lambda)$. We note that $b_{\gamma_0} = 1$. Fix $n \geq \ell$. By the definition of $\gamma_0$,
	\begin{align*}
		\int_0^{\infty} \chi_{\dakh{\gamma_0(\lambda) = n}} \frac{\dd{\lambda}}{\lambda}
		& \leq b_n \int_{[s_{n - 1}, s_n) \cap [2u_n, \infty)} \frac{\dd{\lambda}}{\lambda} \\
		& \leq \frac{b_n(s_n - s_{n - 1})}{2u_n} = \frac{b_n}{2}.
	\end{align*}
	Summing over $n \geq \ell$ and taking $\EE_{\ell}$, we have
	\begin{align*}
		\int_0^{\infty} \EE_{\ell}\xkh{\chi_{\dakh{\gamma_0(\lambda) < \infty}} u_{\gamma_0(\lambda)} \psi(\rho_{\gamma_0(\lambda)})} \frac{\dd{\lambda}}{\lambda} & = \EE_{\ell}\xkh{\sum_{n \geq \ell} u_n \psi(\rho_n) \int_0^{\infty} \chi_{\dakh{\gamma_0(\lambda) = n}} \frac{\dd{\lambda}}{\lambda}} \\
		& \leq \frac{1}{2} \EE_{\ell}\xkh{\sum_{n \geq \ell} b_n u_n \psi(\rho_n)}.
	\end{align*}
	For the other term, using $\displaystyle \int_{\frac{s_n}{4}}^{s_n} \frac{\dd{\lambda}}{\lambda} = \log 4$, we obtain
	\begin{align*}
		\int_0^{\infty} \EE_{\ell}\xkh{\sum_{n \geq \ell} u_n e_n \chi_{\dakh{\lambda < s_n \leq 4\lambda}}} \frac{\dd{\lambda}}{\lambda} & = \EE_{\ell}\xkh{\sum_{n \geq \ell} u_n e_n \int_0^{\infty} \chi_{\dakh{\lambda < s_n \leq 4\lambda}} \frac{\dd{\lambda}}{\lambda}} \\
		& = \EE_{\ell}\xkh{\sum_{n \geq \ell} u_n e_n \int_0^{\infty} \chi_{\dakh{\frac{s_n}{4} \leq \lambda \leq s_n}} \frac{\dd{\lambda}}{\lambda}} \\
		& \leq \log 4 \cdot \EE_{\ell}\xkh{\sum_{n \geq \ell} u_n e_n}.
	\end{align*}
	Thus, integrating \eqref{eq:good-lambda} against $\displaystyle 2 \frac{\dd{\lambda}}{\lambda}$, we obtain
	\begin{equation}
		\label{eq:integrated-good-lambda}
		\int_0^{\infty} 2\lambda \EE_{\ell}\xkh{v \chi_{\dakh{H > 4\lambda, L \leq \frac{\lambda}{2}}}} \dd{\lambda} \lesssim \EE_{\ell}\xkh{\sum_{n \geq \ell} b_n u_n \psi(\rho_n)} + \EE_{\ell}\xkh{\sum_{n \geq \ell} u_n e_n}.
	\end{equation}
	The remaining part is controlled by $L$. Since $b_n \in \dakh{0, 1}$, condition \eqref{eq:wCON} gives
	\begin{align*}
		\EE_{\ell}\xkh{v L^2} & \leq \EE_{\ell}\xkh{\sum_{n \geq \ell} v b_n u_n^2} \\
		& = \EE_{\ell}\xkh{\sum_{n \geq \ell} b_n u_n^2 v_n} \\
		& \leq \EE_{\ell}\xkh{\sum_{n \geq \ell} b_n u_n \psi(\rho_n)}.
	\end{align*}
	Applying the conditional layer-cake formula and then \eqref{eq:integrated-good-lambda}, we have
	\begin{align*}
		\EE_{\ell}\xkh{v H^2} & = 16 \int_0^{\infty} 2\lambda \EE_{\ell}\xkh{v \chi_{\dakh{H > 4\lambda}}} \dd{\lambda} \\
		& \leq 16 \int_0^{\infty} 2\lambda \EE_{\ell}\xkh{v \chi_{\dakh{H > 4\lambda, L \leq \frac{\lambda}{2}}}} \dd{\lambda} + 16 \int_0^{\infty} 2\lambda \EE_{\ell}\xkh{v \chi_{\dakh{L > \frac{\lambda}{2}}}} \dd{\lambda} \\
		& = 16 \int_0^{\infty} 2\lambda \EE_{\ell}\xkh{v \chi_{\dakh{H > 4\lambda, L \leq \frac{\lambda}{2}}}} \dd{\lambda} + 64 \EE_{\ell}\xkh{v L^2} \\
		& \lesssim \EE_{\ell}\xkh{\sum_{n \geq \ell} b_n u_n \psi(\rho_n)} + \EE_{\ell}\xkh{\sum_{n \geq \ell} u_n e_n} + \EE_{\ell}\xkh{\sum_{n \geq \ell} b_n u_n \psi(\rho_n)} \lesssim C_{\psi} u_{\ell}.
	\end{align*}
	The last inequality follows from Corollary~\ref{cor:principal-embedding} and \eqref{eq:defect-total}.
\end{proof}

\begin{rem}
	In the atomic case, principal sequences correspond to sparse families. For a dyadic lattice $\cD = \bigcup_n \cD_n$ and a sparse family $\cQ \subset \cD$, this correspondence is given by
	\[
		b_n = \sum_{I \in \cQ \cap \cD_n} \chi_I, \qquad \sum_n b_n \EE_n\xkh{f} = \sum_{I \in \cQ} \lkh{f}_I \chi_I = T_{\cQ} f.
	\]
	Conversely, a principal sequence selects a sparse family from the atoms of the filtration. Thus Proposition~\ref{prop:principal-square}, together with Lemma~\ref{lem:sawyer}, already proves the separated entropy bump theorem for sparse, or Lerner-type, operators. Lemma~\ref{lem:YoungAlpha} then gives the separated Orlicz bump theorem under the assumptions of Theorem~\ref{thm:1side-bump}.
	
	The connection with Calder\'on--Zygmund operators follows from Lerner's theorem \cite[Theorem~1.1]{lerner2013}. For a Calder\'on--Zygmund operator $T$ on $\RR^d$ and a Banach function space $X$ over $\RR^d$, one has
	\[
		\norm{T_{\natural} f}_X \leq C(T, d) \sup_{\cD, \, \cQ \subset \cD} \norm{T_{\cQ}\abs{f}}_X,
	\]
	where the supremum is taken over all dyadic lattices $\cD$ and all sparse families $\cQ \subset \cD$, and $T_{\natural}$ is the maximal truncated operator
	\[
		T_{\natural} f(x) = \sup_{0 < \varepsilon < R < \infty} \abs{\int_{\varepsilon < \abs{x - y} < R} K(x, y) f(y) \dd{y}}.
	\]
	If the corresponding bump bounds are uniform over the dyadic lattices, taking $X = L^2(v)$ gives
	\[
		\norm{T_{\natural}(f u)}_{L^2(v)} \lesssim \sup_{\cD, \, \cQ \subset \cD} \norm{T_{\cQ}(\abs{f} u)}_{L^2(v)} \lesssim \norm{f}_{L^2(u)}.
	\]
	Hence the corresponding separated entropy and Orlicz bump conditions also imply the two-weight estimates for Calder\'on--Zygmund operators.
\end{rem}

We now construct a principal sequence to dominate the sum in Theorem~\ref{thm:testing}.

\begin{proof}[Proof of Theorem~\ref{thm:testing}]
	Fix $\ell \in \NN$ and put $K = \norm{a}_{\car}$. Let $\tau_0 = \ell$ and define
	\[
		\tau_{j + 1} = \inf\dakh{k > \tau_j : u_k > 4u_{\tau_j} \text{ or } \sum_{i = \tau_j}^k a_i > 4K}.
	\]
	By the conditional Doob inequality and the Carleson condition,
	\begin{align*}
		\EE_{\tau_j}\xkh{\chi_{\dakh{\tau_{j + 1} < \infty}}}
		& \leq \EE_{\tau_j}\xkh{\chi_{\dakh{\sup_{k > \tau_j} u_k > 4u_{\tau_j}}}} + \EE_{\tau_j}\xkh{\chi_{\dakh{\sum_{i \geq \tau_j} a_i > 4K}}} \\
		& \leq \frac{1}{4} + \frac{1}{4K} \EE_{\tau_j}\xkh{\sum_{i \geq \tau_j} a_i} \leq \frac{1}{2}.
	\end{align*}
	Hence $b_n = \sum_{j \geq 0} \chi_{\dakh{\tau_j = n}}$ is a principal sequence. Since $a_n \leq K$, the definition of $\tau_j$ gives
	\[
		u_n \leq 4u_{\tau_j}, \qquad \sum_{\tau_j \leq i \leq n} a_i \leq 4K, \qquad \text{for } \tau_j \leq n < \tau_{j + 1}.
	\]
	Summing over the stopping intervals, we obtain
	\begin{align*}
		\sum_{n \geq \ell} a_n u_n & = \sum_{j \geq 0} \sum_{\tau_j \leq n < \tau_{j + 1}} a_n u_n \\
		& \leq 4 \sum_{j \geq 0} u_{\tau_j} \sum_{\tau_j \leq n < \tau_{j + 1}} a_n \\
		& \leq 16K \sum_{j \geq 0} u_{\tau_j} = 16K \sum_{n \geq \ell} b_n u_n.
	\end{align*}
	Proposition~\ref{prop:principal-square} now yields
	\[
		\EE_{\ell}\xkh{v \xkh{\sum_{n \geq \ell} a_n u_n}^2} \leq (16K)^2 \EE_{\ell}\xkh{v \xkh{\sum_{n \geq \ell} b_n u_n}^2} \lesssim C_{\psi} K^2 u_{\ell}. \qedhere
	\]
\end{proof}

\begin{proof}[Proof of Theorem~\ref{thm:1side}]
	Apply Theorem~\ref{thm:testing} to $\widetilde{v} = \frac{\alpha(1)}{A_u}v$. Recalling that $C_{\psi} = 2\alpha(1)C_{\alpha}$, we obtain
	\begin{align*}
		\EE_{\ell}\xkh{v \xkh{\sum_{n \geq \ell} a_n u_n}^2}
		& = \frac{A_u}{\alpha(1)} \EE_{\ell}\xkh{\widetilde{v} \xkh{\sum_{n \geq \ell} a_n u_n}^2} \\
		& \lesssim \frac{A_u}{\alpha(1)} C_{\psi} \norm{a}_{\car}^2 u_{\ell} \\
		& = 2 C_{\alpha} A_u \norm{a}_{\car}^2 u_{\ell}.
	\end{align*}
	Interchanging $u$ and $v$ gives the other testing estimate, and Lemma~\ref{lem:sawyer} completes the proof.
\end{proof}

\clearpage
\nocite{*}
\bibliographystyle{plain}
\bibliography{references}

\end{document}